\documentclass[11pt,english]{article} 
\usepackage[utf8]{inputenc}
\usepackage[T1]{fontenc}
\usepackage{lmodern}
\usepackage[a4paper]{geometry}
\usepackage{babel}
\usepackage{amsmath}
\usepackage{cleveref}

\usepackage{nag}
\usepackage{xcolor}

\usepackage{mathtools}
\usepackage{amsthm}
\usepackage{amssymb}
\usepackage{graphicx}
\usepackage{caption}
\usepackage{enumitem}
\usepackage{url}

                \newcommand{\R}{\mathbb R}        
\newcommand{\Z}{\mathbb Z}              \newcommand{\F}{\Phi}      
\newcommand{\f}{\varphi}                \newcommand{\what}{\widehat}
\newcommand{\D}{\partial}               
\newcommand{\s}{\sigma}                 \newcommand{\g}{\gamma} 
             
                 \def\l{\lambda}

\newcommand{\comp}{{\small{\circ}}}     \def\a{\alpha}
\newcommand{\fin}{\hfill{$\square$}}    
            \newcommand{\RP}{{\mathbb R\mathrm P}}
\newcommand{\ind}{\mathrm{ind}}         \newcommand{\sign}{\mathrm{sign}}

\def\pd#1#2{\frac{\D#1}{\D#2}}          \newcommand{\Sym}{\mathrm{Sym}}
\def\t{\tau}


\begin{document}

\captionsetup[figure]{labelfont={bf},labelformat={default},labelsep=period,name={\small Fig.}}


\theoremstyle{plain}
\newtheorem{theorem}{\bf Theorem}
\newtheorem*{theorem*}{\bf Theorem}
\newtheorem{teorema}{\bf Theorem}
\def\theteorema{\Roman{teorema}}
\newtheorem{lemma}{\bf Lemma}
\newtheorem*{lemma*}{\bf Lemma}
\newtheorem*{mlemma}{\bf Main Lemma}

\newtheorem{proposition}{\bf Proposition}
\newtheorem*{proposition*}{\bf Proposition}
\newtheorem{corollary}{\bf Corollary}
\newtheorem*{corollary*}{\bf Corollary}
\newtheorem*{conjecture}{\bf Conjecture}
\newtheorem*{fact*}{\bf Fact}
\newtheorem{property}{\bf Property}


\theoremstyle{definition}
\newtheorem{definition}{\bf Definition}
\newtheorem*{definition*}{\bf Definition}

\newtheorem{example}{\bf Example}
\newtheorem*{example*}{\bf Example}
\newtheorem{Example}{\bf Example}[section]
\theoremstyle{remark}
\newtheorem*{remark*}{\bf Remark}
\newtheorem{remark}{\bf Remark}
\newtheorem*{problem}{\bf Problem}


\title{\vspace{-2.5cm} {\small To appear in \textsc{Moscow Mathematical Journal}
\textbf{20}:3, July-August 2020} \\ 
{\footnotesize A talk in \url{https://www.youtube.com/watch?v=HIG-Eu8dqKI}} \\
\ \\ Characteristic Points, Fundamental Cubic Form \\
and Euler Characteristic of Projective Surfaces}

\author{By \ Maxim\,Kazarian\footnote{National Research University Higher School of Economics, Moscow, Russia; and  Skolkovo Institute of Science and Technology, Moscow, Russia. \emph{kazarian@mccme.ru}},
  \ and \ Ricardo\,Uribe-Vargas\footnote{Laboratory Solomon Lefschetz UMI2001 CNRS, 
  Universidad Nacional Autonoma de M\'exico, M\'exico City; and Institut de Math\'ematiques de Bourgogne, UMR 5584, CNRS, Universit\'e 
  Bourgogne Franche-Comt\'e, F-21000 Dijon, France. \emph{r.uribe-vargas@u-bourgogne.fr}}}

\date\empty                     
\maketitle



\let\h\theta
\let\t\tau

\begin{abstract}
\noindent
We define local indices for 
projective umbilics and godrons (also called cusps of Gauss) on generic smooth surfaces
in projective 3-space. By means of these indices, we provide formulas that relate the algebraic
numbers of those characteristic points on a surface (and on domains of the surface) with the
Euler characteristic of that surface (resp. of those domains).
These relations determine the possible coexistences of 
projective umbilics and godrons on the surface. 
Our study is based on a ``fundamental cubic form'' for which we provide a simple expression. 
\end{abstract}
\smallskip

{\footnotesize
\noindent 
\textbf{Keywords}. Differential geometry, surface, front, singularity,
parabolic curve, flecnodal curve, index, projective umbilic, quadratic point, godron, cusp of Gauss.
}
\smallskip

{\footnotesize
\noindent 
\textbf{MSC}. 53A20, 53A55, 53D10, 57R45, 58K05
}


\section{Introduction}

\paragraph{1.1 Counting Characteristic Points.}
The classical M\"obius theorem asserts that a non-contractible curve $C$ embedded in the projective plane
has at least three \textit{inflections} (points where the curve has unusual tangency with its
tangent line). Its \textit{dual curve}, denoted $C^\vee\subset(\RP^2)^\vee$,
consists of the tangent lines to $C$.  
{\em The higher contact of $C$ with its tangent line at an inflection is expressed as a cusp of $C^\vee$} 
(Fig.\,\ref{moebius}).

\begin{figure}[h]
\centering
\begin{minipage}[t]{5cm}
\centering
\includegraphics [scale=0.035]{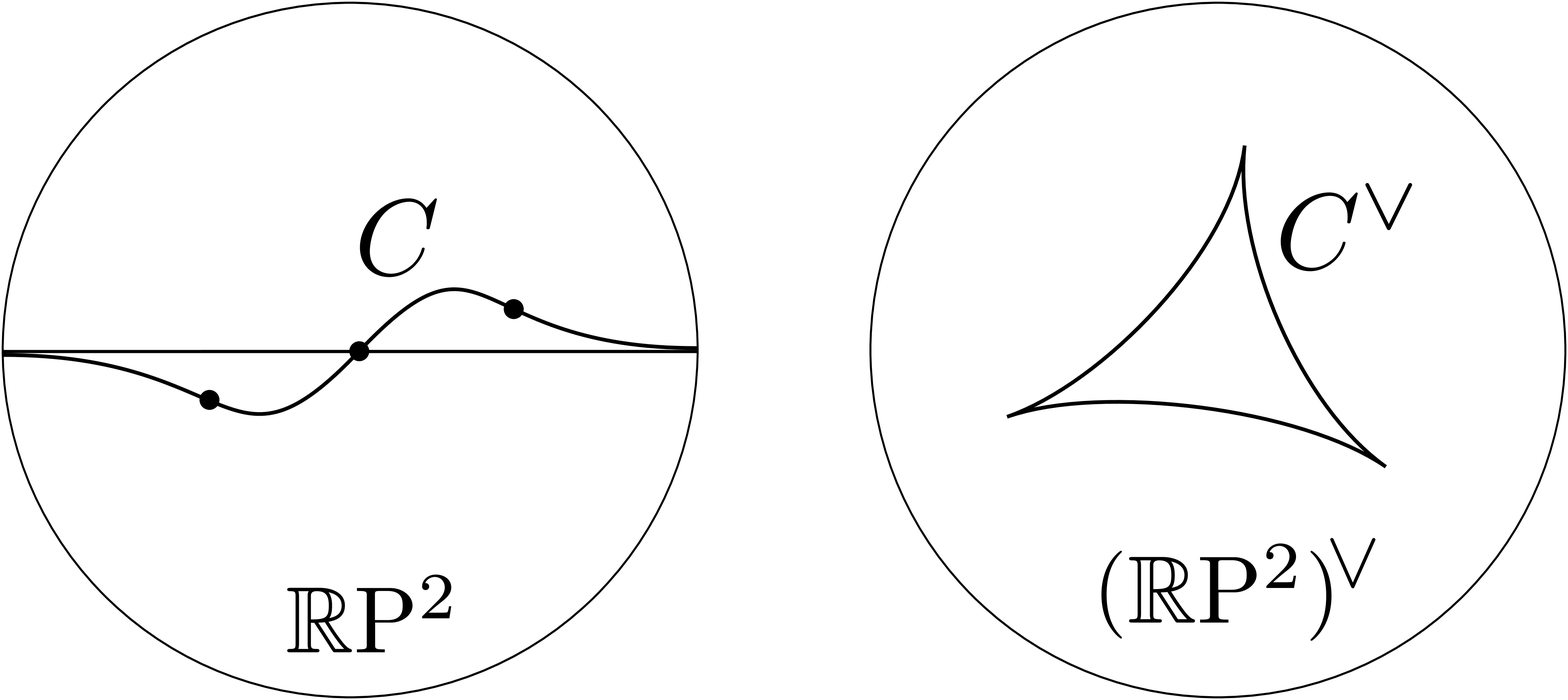}
\caption{\small M\"oebius Theorem.}
\label{moebius}
\end{minipage}
\hspace{1cm}
\begin{minipage}[t]{6cm}
\centering
\includegraphics [scale=0.3]{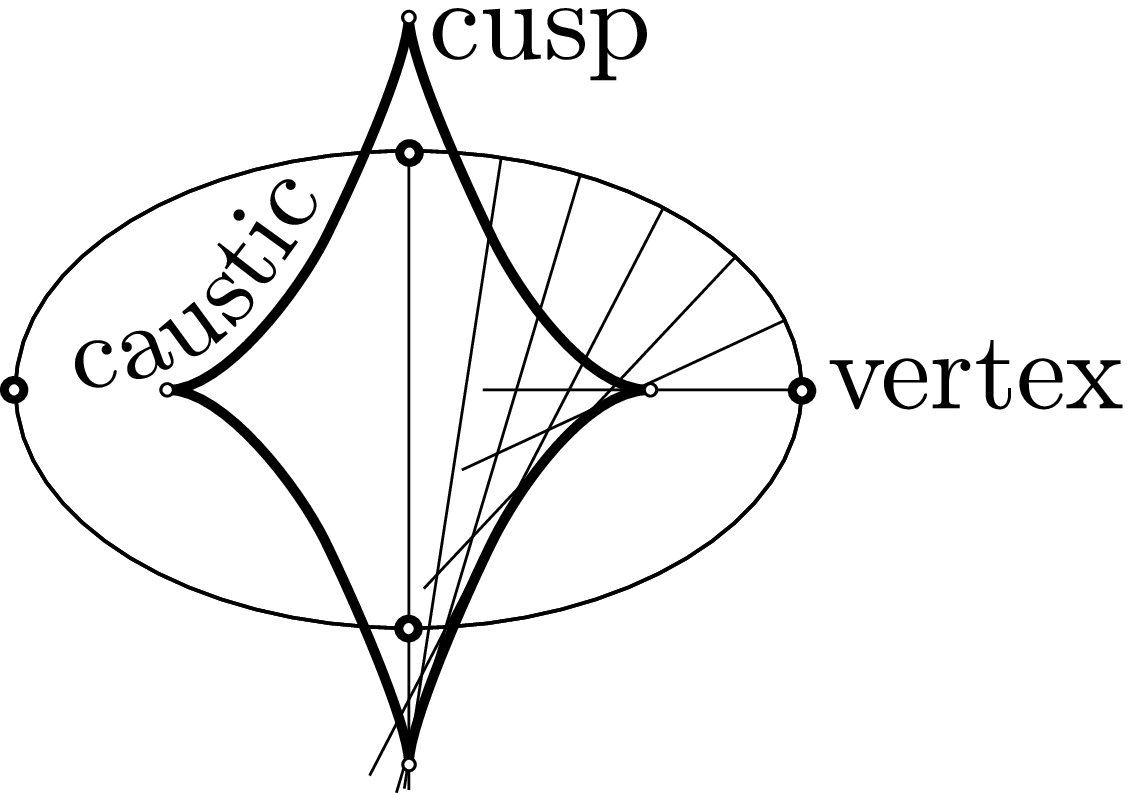}
\caption{\small Four-vertex Theorem.}
\label{4-vertex}
\end{minipage}
\end{figure}

\noindent
The classical $4$-Vertex Theorem (S. Mukhopadhyaya, 1909) asserts that an embedded closed curve in Euclidean plane has
at least $4$ \textit{vertices} (points where the curve has unusual tangency with its
osculating circle).  
The \textit{caustic} (or \textit{evolute}) of a curve $C$ of Euclidean plane is the envelope of its 
normal lines. 
{\em The higher contact of $C$ with its osculating circle at a vertex 
is expressed as a cusp of the caustic} (Fig.\,\ref{4-vertex}).  

Mukhopadhyaya also proved the statement that a convex closed plane curve has
at least $6$ \textit{sextactic points} (where the curve has unusual tangency with its
osculating conic).

The recent activity on variations and generalisations of these theorems
(cf. \cite{Kazarian-Flat, Dima, Uribeconformal, Chekanov-Pushkar, Ovsienko-Tabachnikov}) 
was in part stimulated by the works of V.\,I.\,Arnold who presented these results as
special cases of general theorems of contact and symplectic topology, in which 
singularity theory plays an important r\^ole (cf. \cite{Arnold-RPCS, Arnold-LSTSC, Arnold-TPTAC}). 

In this paper, we provide a ``global counting'' of projective umbilics and godrons (both defined below) 
of generic smooth surfaces of $\RP^3$ (and $\R^3$), and we state some coexistence relations\,: 


A generic smooth surface of $\RP^3$ (or $\R^3$) consists of three parts, may be empty\,:
an open \textit{elliptic domain} at which the second fundamental form $Q$ is definite
(the Gaussian curvature $K$ being positive); an open {\em hyperbolic domain}
where the form $Q$ is indefinite ($K$ being negative); 
and a \textit{parabolic curve} $P$ where $Q$ is degenerate ($K=0$).
The two lines on which $Q$ vanish at a given hyperbolic point are called  \textit{asymptotic lines} of
the surface at that point. At the parabolic points there is a unique (but double) asymptotic line. 

Each of the above three parts contains characteristic isolated points\,: 
a \textit{godron} (or \textit{cusp of Gauss}) is a parabolic point at which the unique (but double) asymptotic line is
tangent to the parabolic curve; a \textit{hyperbolic} or an
\textit{elliptic projective umbilic}\footnote{Projective umbilics are also called \textit{quadratic points}
  (see \cite{Ovsienko-Tabachnikov}).} 
(or \textit{node}) is a point where the surface is approximated by a quadric up to order $3$. 
We also call \textit{hyperbonodes} the hyperbolic projective umbilics and
\textit{ellipnodes} the elliptic ones.
\medskip

{\footnotesize
\noindent
Thus ellipnodes and hyperbonodes of surfaces are the analogues of sextactic points of curves.}
\medskip

Each godron of a generic surface has an intrinsic index with value $-1$ or $+1$
(cf. \cite{Dima, Uribegodron}). Below, we characterise the hyperbonodes and ellipnodes as the
singular points of an intrinsic field of (triples of) lines, which ascribes also to them an index.

A godron, a hyperbonode or an ellipnode is said to be \textit{positive} (or \textit{negative})
if its index is positive (resp. negative).  Let us state our main result.

Given a generic smooth compact surface $S$ of $\RP^3$, let $H$ be a connected component
of the hyperbolic domain and $E$ a connected component of the elliptic domain.

Write $\#e(E)$, $\#g(E)$, $\#h(H)$ and $\#g(H)$ for the respective algebraic numbers of ellipnodes in $E$,
godrons on $\D E$, hyperbonodes in $H$ and godrons on $\D H$. 
\begin{theorem}\label{main-theorem}
For a generic surface $S$ of $\RP^3$ the following three equalities hold 
\begin{enumerate}[label={\rm (\alph*)}, parsep=0.124cm, itemsep=0.0cm,topsep=0.2cm]\itemsep=0.0cm  
\item  $\#h(H)\,=\,\chi(H)$;
\item  $\#g(\D H)\,=\,2\,\chi(H)$; 
\item  $\#e(E)\,-\,\#g(\D E)\,=\,3\,\chi(E)$.
\end{enumerate}
\end{theorem}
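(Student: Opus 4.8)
The plan is to deduce all three identities from Poincar\'e--Hopf--type index theorems for the intrinsic line fields carried by the surface, treating the elliptic and hyperbolic domains as compact surfaces with boundary whose common boundary is the parabolic curve $P=\partial E=\partial H$. Two fields enter the argument: the \emph{asymptotic net}, i.e.\ the pair of null directions of the second fundamental form $Q$ (a two--valued line field, defined and smooth throughout the open hyperbolic domain and degenerating along $P$); and the three--valued line field cut out by the fundamental cubic form, a section of $\Sym^3 T^*S\otimes L$ for a suitable line bundle $L$, whose three real root directions are the ``triples of lines'' singled out above. The projective umbilics are exactly the interior singular points of this cubic field, where its three directions collide (the cubic becomes a perfect cube), so that ellipnodes are its singularities inside $E$ and hyperbonodes its singularities inside $H$; the godrons lie on $P$, where they appear as the points at which the (double) asymptotic direction is tangent to the boundary. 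Each singularity carries the intrinsic index recalled above, and the strategy is to balance these indices against the Euler characteristic on each domain.

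First I would treat (b), the cleanest case. On the open set $H$ the form $Q$ is indefinite, so its two null directions are distinct and vary smoothly; hence the asymptotic net has no singularities in the interior, and the whole index is concentrated on the boundary $P$, where the two directions coalesce. The relative index identity for this two--valued line field on $\overline H$ equates the total boundary index to $2\,\chi(H)$, the factor $2$ being the two sheets of the net. Expanding the boundary index as a sum of local contributions, and checking via a normal form near a godron that a positive (resp.\ negative) godron contributes $+1$ (resp.\ $-1$), one obtains $\#g(\partial H)=2\,\chi(H)$.

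For (a) and (c) I would run the same scheme for the cubic line field, separately on $\overline H$ and on $\overline E$. The decisive structural input, to be extracted from the explicit expression for the fundamental cubic form, is the real--root pattern of the cubic together with the monodromy of its three root directions. On $\overline E$ the cubic has three real directions forming a genuine $3$--web whose only interior singularities are the ellipnodes; the relative index identity then produces the coefficient $3$, while the boundary contribution of the godrons on $\partial E$ enters with the opposite sign to that in (b), yielding $\#e(E)-\#g(\partial E)=3\,\chi(E)$. On $\overline H$ the three directions are organised by the asymptotic structure and acquire nontrivial monodromy around the flecnodal curve (where two of them coincide); after quotienting by this monodromy the field relevant to the hyperbonode count reduces to a single effective line field, so the relative index identity gives the coefficient $1$ and hence $\#h(H)=\chi(H)$.

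The main obstacle is the boundary bookkeeping at the godrons. A godron is simultaneously a point where $P$ is tangent to the asymptotic direction and a point where the cubic field degenerates on the boundary; pinning down its exact contribution to each relative index identity---in particular verifying that it enters with coefficient $+1$ in (b) but with the opposite sign in (c), consistently with the intrinsic godron index of sign $\pm1$---requires a simultaneous normal form for $Q$, for the cubic form, and for $P$ near a godron, and a matching of the resulting rotation numbers with the indices defined earlier. A closely related difficulty, which is what actually fixes the coefficients $3$ and $1$, is to control the real--root count and the monodromy of the three cubic directions over $E$ and over $H$; this must be read off from the simple expression obtained for the fundamental cubic form rather than postulated.
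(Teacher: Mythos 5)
Your global framework---Poincar\'e--Hopf for multivalued line fields on the closures of $E$ and $H$, the $2$-valued asymptotic net for (b), the field of zeroes of the fundamental cubic form for (a) and (c)---is exactly the paper's, and your treatments of (b) and (c) are essentially its proofs: for (b) the paper assigns fractional indices $\pm\tfrac12$ to positive/negative godrons and applies the boundary version of Poincar\'e--Hopf (your normalisation, where godrons contribute $\pm1$ and the factor $2$ comes from the two sheets, is the double-cover variant the paper mentions as an alternative); for (c) it computes $\ind_e\tau=\pm\tfrac13$ at ellipnodes and $\ind_{g^\pm}\tau=\mp\tfrac13$ at godrons on $\D E$. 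However, your mechanism for (a) is wrong. On the hyperbolic domain the cubic form does not carry ``three real directions organised by the asymptotic structure with monodromy around the flecnodal curve'': at every hyperbolic point that is not a hyperbonode, $W$ has exactly \emph{one} real zero line, the other two roots being complex conjugate (Theorem~\ref{thfund3}, item 4; in coordinates with $Q=xy$ the relevant cubics are $Ax^3+By^3$, which have a single real root line). Hence $\tau$ is genuinely single-valued on $H$; the flecnodal curve is not a branch locus of $\tau$ (there the single real root merely becomes triple and coincides with an asymptotic direction, which makes the field neither multivalued nor singular), and ``quotienting by monodromy'' is not an available operation---monodromy on a surface arises around isolated points, not around curves, and if the root directions really did branch along a curve, that curve would consist of singular points and would ruin the index count. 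The coefficient $1$ in (a) comes from this one-real-root fact together with the local computation $\ind_h\tau=\pm1$ (Proposition~\ref{local-index-h}), not from any quotient.

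There are two further gaps. In (a) you say nothing about the boundary: to apply the index theorem on $\overline H$ with no godron contribution at all, one must know that the extension of $\tau$ to $\D H$ is tangent to the parabolic curve at \emph{every} boundary point, godrons included (items 5--6 of Theorem~\ref{thfund3}), i.e.\ that the field is trivialised along $\D H$; this tangency is precisely why godrons enter the counts in (b) and (c) but not in (a), and your scheme as written cannot account for that asymmetry. Also, your identification of the singularities is inaccurate: at a projective umbilic the cubic does not ``become a perfect cube''---it vanishes identically (since $C=QL$ is killed by the projection to $U_Q^-$), which is why the field is undefined there; the perfect-cube degeneration occurs instead at godrons, where $W$ has a triple zero line tangent to the parabolic curve. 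This matters because the entire index bookkeeping rests on knowing exactly where each field is singular and how it degenerates along $P$.
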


Summing up the three equalities of Theorem~\ref{main-theorem} over all connected
components of both hyperbolic and elliptic domains, we obtain the following
relation derived first in\,\cite{Dima}.

\begin{corollary}\label{3Euler}
  For a generic surface $S$ of $\RP^3$ the sum of the algebraic numbers of ellipnodes and hyperbonodes on $S$
  is thrice the Euler characteristic of $S$\,: 
\[\#e(S)+\#h(S)=3\chi(S)\,.\]
\end{corollary}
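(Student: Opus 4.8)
The plan is to obtain the corollary formally by summing the three identities of Theorem~\ref{main-theorem} over all connected components of the two domains. Let the sums $\sum_H$ and $\sum_E$ range over the components $H$ of the hyperbolic domain and $E$ of the elliptic domain, so that by definition $\#h(S)=\sum_H\#h(H)$ and $\#e(S)=\sum_E\#e(E)$. Since the Euler characteristic is additive over disjoint components, summing identity~(a) gives $\#h(S)=\sum_H\chi(H)$, summing~(b) gives $\sum_H\#g(\D H)=2\sum_H\chi(H)$, and summing~(c) gives $\#e(S)-\sum_E\#g(\D E)=3\sum_E\chi(E)$.

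The key step is to identify the two godron sums. Each godron lies on the parabolic curve $P$, which for a generic surface is a disjoint union of smooth circles, each of them separating locally the elliptic from the hyperbolic side; hence every godron lies on the boundary of exactly one hyperbolic and exactly one elliptic component. As the index of a godron is intrinsic (equal to $\pm1$ regardless of the side from which $P$ is approached), each godron is counted once and with the same sign in $\sum_H\#g(\D H)$ and in $\sum_E\#g(\D E)$. I would therefore set $\#g(S):=\sum_H\#g(\D H)=\sum_E\#g(\D E)$, the total algebraic number of godrons on $S$.

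Feeding this into the summed identities, I would eliminate $\#g(S)$ and $\sum_H\chi(H)$, using $\#g(S)=2\sum_H\chi(H)$ from~(b) and $\#h(S)=\sum_H\chi(H)$ from~(a), to get
\[
\#e(S)+\#h(S)=\Bigl(3\sum_E\chi(E)+\#g(S)\Bigr)+\sum_H\chi(H)=3\sum_E\chi(E)+3\sum_H\chi(H).
\]
It then remains only to verify that $\sum_E\chi(E)+\sum_H\chi(H)=\chi(S)$. This is the step I expect to require the most care: the surface is stratified as $S=(\text{elliptic domain})\sqcup(\text{hyperbolic domain})\sqcup P$, and I would invoke additivity of the Euler characteristic over this decomposition together with $\chi(P)=0$ (a union of circles). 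The one genuine subtlety is the open/closed bookkeeping—each component is the interior of a compact surface whose boundary is a union of full parabolic circles, so its Euler characteristic coincides with that of its closure and the circles contribute nothing. Granting this, the displayed identity reads $\#e(S)+\#h(S)=3\chi(S)$, as claimed. The main obstacle is thus not the algebra but confirming that the Euler characteristics appearing in Theorem~\ref{main-theorem} are normalized so that the parabolic curve drops out of the global count.
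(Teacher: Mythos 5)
Your proposal is correct and is essentially the paper's own proof: the paper obtains the corollary precisely by summing the three equalities of Theorem~\ref{main-theorem} over all connected components of the hyperbolic and elliptic domains. Your additional bookkeeping --- each godron lies on the boundary of exactly one $H$ and one $E$ with the same intrinsic sign, so the two godron sums cancel, and $\sum_E\chi(E)+\sum_H\chi(H)=\chi(S)$ because the parabolic curve is a union of circles with $\chi(P)=0$ and each open component is homotopy equivalent to its closure --- just makes explicit the steps the paper leaves implicit.
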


{\footnotesize
\begin{example*}
A surface diffeomorphic to a sphere has at least $6$ projective umbilics. 
For example, an ovaloid of a cubic surface has exactly $6$ positive ellipnodes \cite{Dima}. 
Consider a local continuous deformation that produces a small hyperbolic island $H$ 
(Fig.\,\ref{fig-six-umbilics}-left). 
By Theorem\,\ref{main-theorem} a and b, $H$ has one positive hyperbonode 
and its boundary $\D H$ has two positive godrons (in the simplest case). 
Then, by Theorem\,\ref{main-theorem} c, the elliptic domain has five positive 
ellipnodes (in the simplest case). 
The local transition is described in Fig.\,\ref{fig-six-umbilics}-right.

\begin{figure}[h] 
\centering
\includegraphics[scale=0.3]{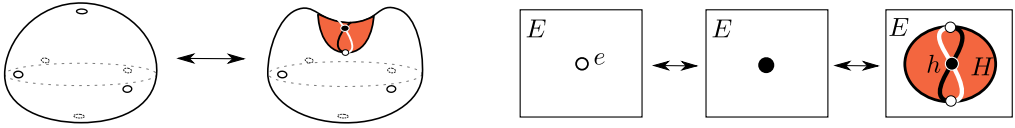}
\caption{\small Left: an ovaloid and its deformation. \ Right: a generic local 
transition (\cite{Uribeevolution}).}
\label{fig-six-umbilics}
\end{figure}
\end{example*}
}

\paragraph{1.2 Fundamental Cubic Form.} To prove Theorem\,\ref{main-theorem} we characterise
the projective umbilics as the singular points of the field of zeroes of a ``fundamental cubic form''.

Similarly as the second fundamental form describes the quadratic deviation of a surface from 
its tangent plane, the fundamental cubic form describes the cubic deviation of the surface 
from its quadratic part. Let us introduce our notations. 
\smallskip

\noindent
\textit{\textbf{\small Monge form}}. To give an expression of the fundamental cubic form, we identify 
the affine chart of the projective space,
$\{[x : y : z : 1]\}\subset\RP^3$ with $\R^3$, and present the germs of surfaces 
at the origin in Monge form \,$z=f(x,y)$\, with
\,$f(0,0)=0$\, and \,$df(0,0)=0$.
\smallskip

We shall express the partial derivatives of $f$ with numerical subscripts\,:
\[f_{ij}(x,y):=\frac{\D^{i+j}f}{\D x^i\D y^j}(x,y) \quad \text{and} \quad f_{ij}:=f_{ij}(0,0)\,.\]

Take the Taylor expansion $f=Q+C+\ldots$, where $Q$ is the quadratic part and $C$ the cubic part.  
The discriminant of the quadratic form $Q$ (multiplied by $4$) is given by the Hessian 
\[H=f_{20}(x,y)f_{02}(x,y)-f_{11}^2(x,y)\,\]
(which is positive at the elliptic points and negative at the hyperbolic points, 
so that the parabolic curve is given by the equation $H=0$).
\medskip

\noindent
\textbf{\small Formula for the Fundamental Cubic Form}.
The fundamental cubic form of a surface is the homogeneous degree~$3$ form on the tangent space given by
\[W= 4H C-Q\,dH\]
where $dH$ is the linear part of the Taylor expansion of the function~$H$.
\medskip

In the hyperbolic domain, the zeroes of the form $W$ define a field $\t$ of lines whose singular points 
are the hyperbonodes; while in the elliptic domain, the zeroes of $W$ define a field $\t$ of triples of lines 
whose singular points are the ellipnodes. 
\paragraph{1.3 Expressions for the Indices.}
Take the asymptotic lines as coordinate axes. At a hyperbonode $h$, the index of the field $\t$ of lines is 
given by (Proposition\,\ref{local-index-h}): 
\begin{equation}\label{expression-index-h}
\ind_h(\t)=1\cdot\sign\left(4f_{11}^2f_{40}f_{04}-(2f_{11}f_{31}-3f_{21}^2)(2f_{11}f_{13}-3f_{12}^2)\right)\,.
\end{equation}

At an ellipnode $e$ for which $Q=\frac{\a}{2}(x^2+y^2)$, and the cubic terms are absent, 
the index of the field $\t$ of triples of lines is given by the expression (Proposition\,\ref{local-index-e}):
\[\ind_e(\t)=\frac{1}{3}\cdot\sign\left((f_{31}-3f_{13})(f_{13}-3f_{31})-(f_{40}-3f_{22})(f_{04}-3f_{22})\right)\,.\] 

\begin{remark*}[\textbf{on a cross-ratio invariant}]
At a hyperbonode $h$ the tangent lines to the flecnodal curves and the asymptotic lines define 
a cross-ratio invariant, that we note $\rho(h)$. The asymptotic lines at $h$ 
have $4$-point contact with the surface. We say that $h$ has \textit{parity} $\s(h)=+1$ if both asymptotic 
lines locally lie on the same side of the surface and $\s(h)=-1$ otherwise. 

Taking the asymptotic lines as coordinate axes, $\rho(h)$ and $\s(h)$ are given by (\cite{Uribeinvariant}):
\begin{align*}
\rho(h)&=1-\frac{(3f_{21}^2-2f_{11}f_{31})(3f_{12}^2-2f_{11}f_{13})}{4f_{11}^2f_{40}f_{04}}\,,\\
\s(h)&= \sign(f_{40}f_{04})\,. 
\end{align*}

From these expressions and equality \eqref{expression-index-h} we get the formula 
\[\ind_h(\t)=\sign(\rho(h)\s(h))\,.\] 

A similar formula holds for the ellipnodes. 
\end{remark*}

{\footnotesize
\noindent
\textbf{Organisation of the paper}. In \S\,\ref{section-properties}, we recall some basic 
properties of generic smooth surfaces related to the characteristic points.
In \S\,\ref{section-FCF}, we give a formal definition of the fundamental cubic form 
and the proofs of its properties. In \S\,\ref{Section-Poincare-Hopf}, we extend the statement of  
Poincaré-Hopf theorem to the case of “multivalued fields of lines” on a surface with boundary. 
In \S\,\ref{Section-Proof-Theorem}, we prove Theorem\,\ref{main-theorem}. 
In \S\,\ref{Section-Local-Indices}, we compute the index of the field of (triples of) lines $\t$ at 
hyperbonodes, ellipnodes and godrons (as boundary singular points). 
} 
\smallskip

{\footnotesize
\noindent
\textbf{Acknowlegments}. We are grateful to J.J.\,Nu\~no\,Ballesteros who communicated to us reference\,\cite{NS}. 
M. Kazarian appreciates the support of Russian Science Foundation, project 16-11-10316. 
R. Uribe-Vargas is grateful to Laboratory Solomon Lefschetz UMI2001 CNRS, 
  Universidad Nacional Autonoma de M\'exico.
}


\section{Properties of surfaces related to the characteristic points}\label{section-properties}
{\footnotesize Let us mention some basic features of generic smooth surfaces related to 
the characteristic points.}

\paragraph{2.1 Contact with Lines.}
There are several characterisations of the different kinds of points of smooth surfaces. 
Let us describe the classification of points of generic surfaces according to their contact 
with lines.
The points of a surface in $3$-space are characterised by \textit{asymptotic lines}
(tangent lines of the surface with more than $2$-point contact).
A point is called hyperbolic (resp. elliptic) if there is two distinct asymptotic lines
(resp. no asymptotic line). The \textit{parabolic curve} consists of the points where
there is a unique (but double) asymptotic line. The \textit{flecnodal curve} is the locus of
hyperbolic points where an asymptotic line admits more than $3$-point contact with the surface.

\begin{figure}[ht]
\centering
\includegraphics[width=.6\textwidth]{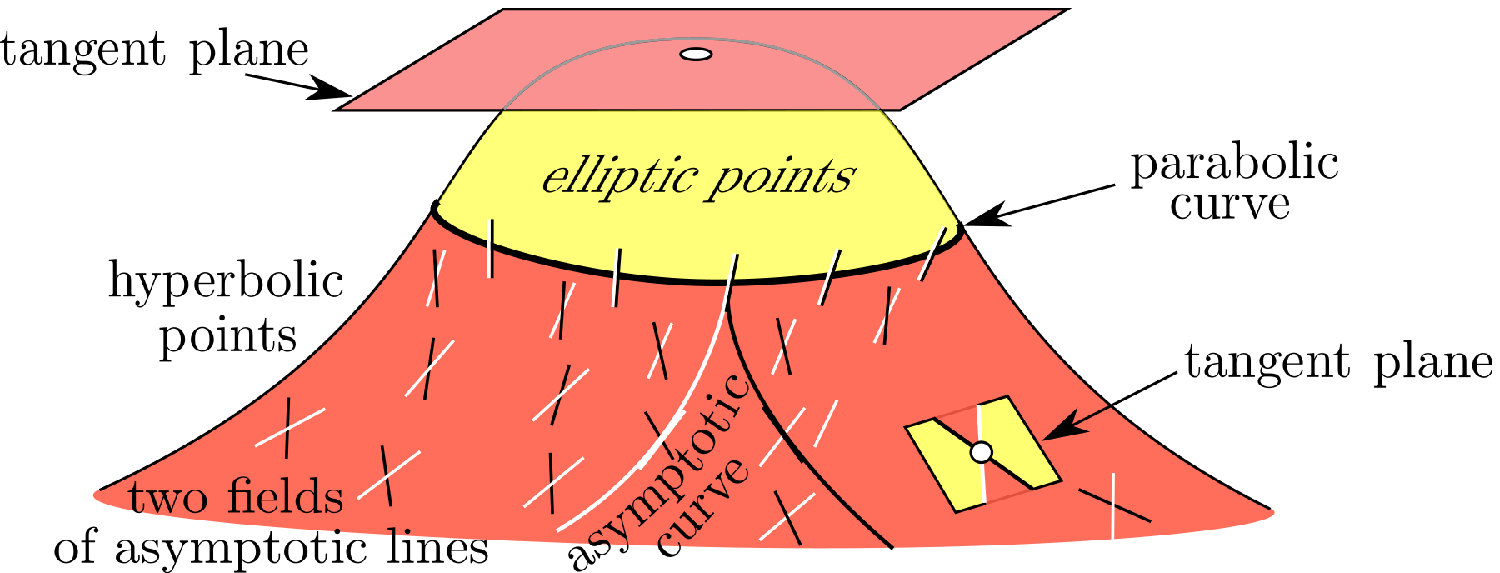}
\end{figure}

\paragraph{2.2 Flecnodal Curve, Hyperbonodes and Godrons.}\label{flecnodal-hyperbonode}
An \textit{asymptotic curve} is an integral curve of a field of asymptotic lines.

Through each hyperbolic point of a surface in oriented space $\RP^3$ there passe two asymptotic
curves; one of them is \textit{left} (the first $3$ derivatives form a negative frame) and the other
is \textit{right} (the frame of the first $3$ derivatives is positive). {\itshape The flecnodal curve consists
of the inflections of the asymptotic curves.} 
The branch formed by the inflections of the left (right) asymptotic curves is called
\textit{left} (\textit{right}) \textit{flecnodal curve}.

{\itshape A hyperbonode is an intersection point of the left and
  right flecnodal curves} (Fig.\,\ref{a-hyperbonode}). 

Moreover, {\itshape a godron is a point of tangency of the flecnodal and parabolic curves. 
It locally separates the left and right branches of the flecnodal curve} (Fig.\,\ref{a-godron}).

\begin{figure}[h]
\centering
\begin{minipage}[t]{5cm}
\centering
\includegraphics [scale=0.35]{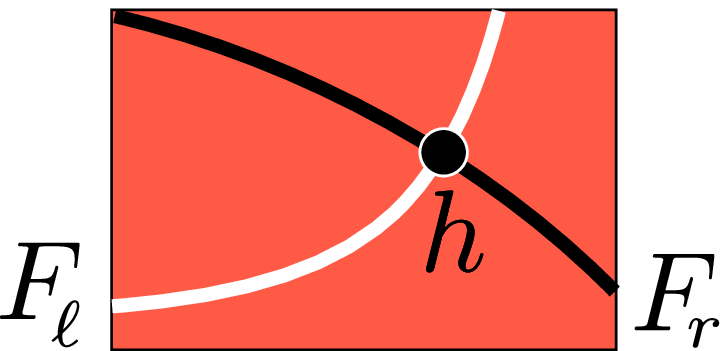}
\caption{\small A hyperbonode.}
\label{a-hyperbonode}
\end{minipage}
\hspace{1cm}
\begin{minipage}[t]{6cm}
\centering
\includegraphics [scale=0.22]{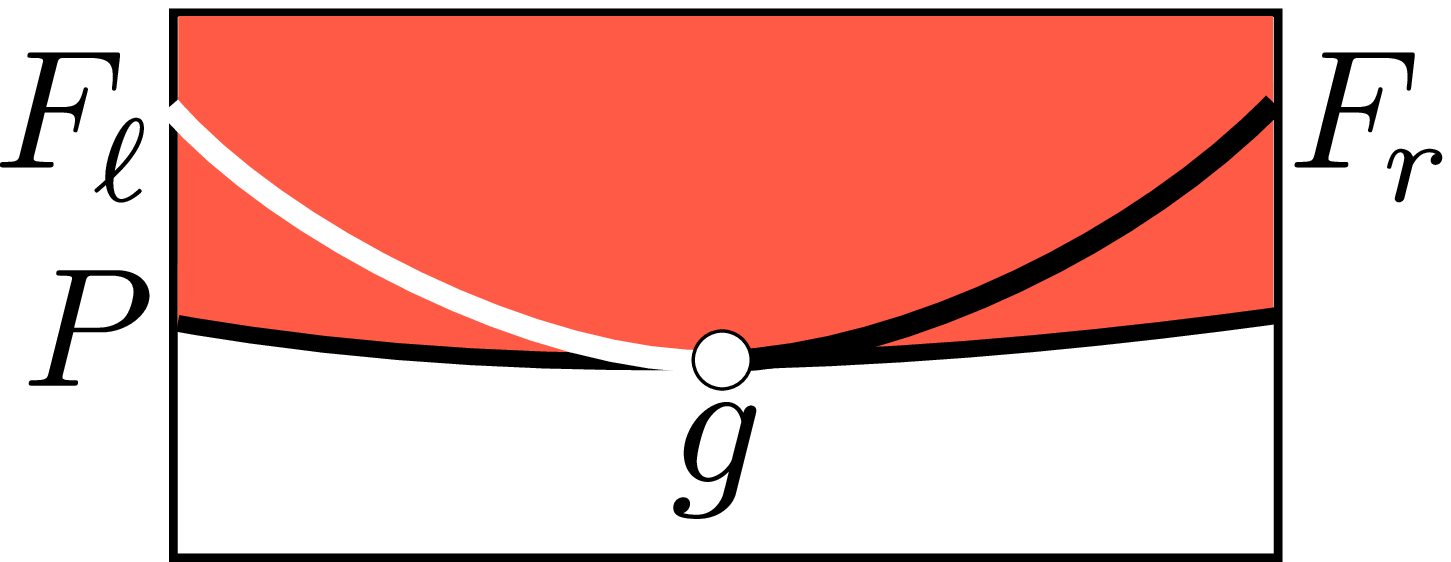}
\caption{\small Flecnodal curve at a godron.}
\label{a-godron}
\end{minipage}
\end{figure}

\begin{example}
  A one sheet hyperboloid is infinitely degenerate\,: the asymptotic lines, at every point,
  are part of the surface. 
  Therefore every point is a hyperbonode (Fig.\,\ref{1sheet-hyperboloid}).
\end{example}

\begin{example}
A generic torus (non-symmetric) is a more typical example.   
Its exterior part is elliptic and the interior one is hyperbolic.
The parabolic curve consists of two closed curves that separate the hyperbolic 
domain from the elliptic one (Fig.\,\ref{genric-torus}).
\end{example}

\begin{figure}[h]
\centering
\begin{minipage}[t]{6cm}
\centering
\includegraphics [scale=0.33]{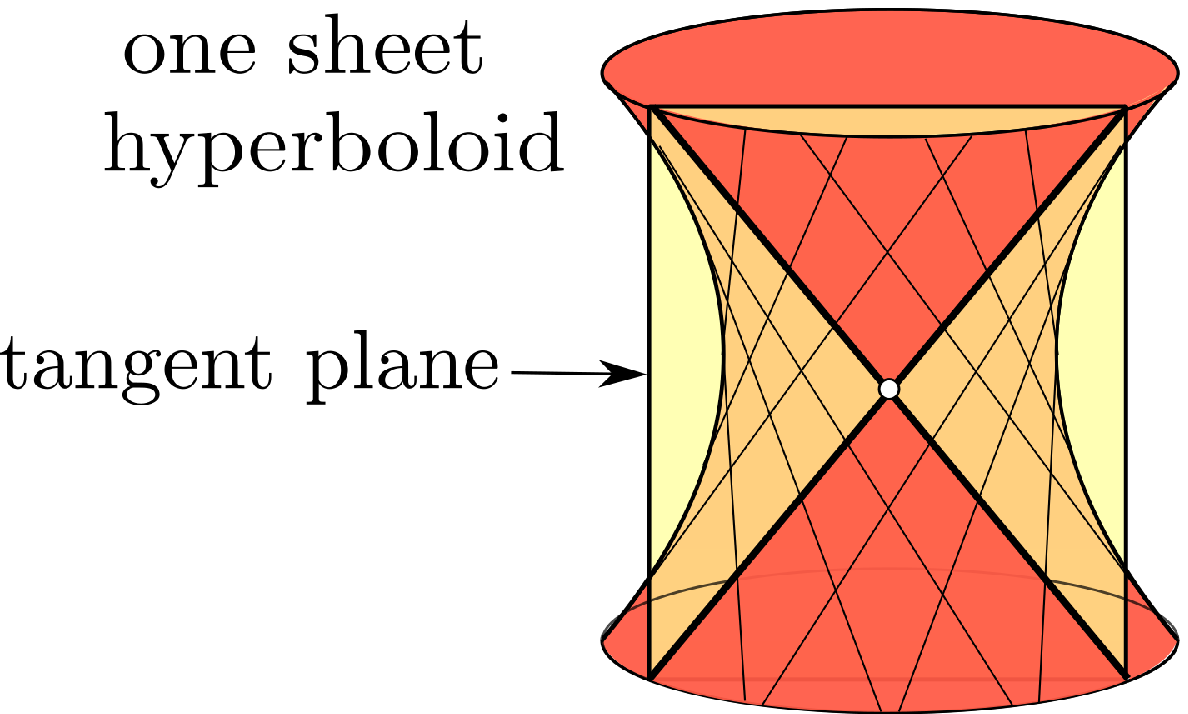}
\caption{\small A one sheet hyperboloid.}
\label{1sheet-hyperboloid}
\end{minipage}
\hspace{1cm}
\begin{minipage}[t]{6cm}
\centering
\includegraphics [scale=0.35]{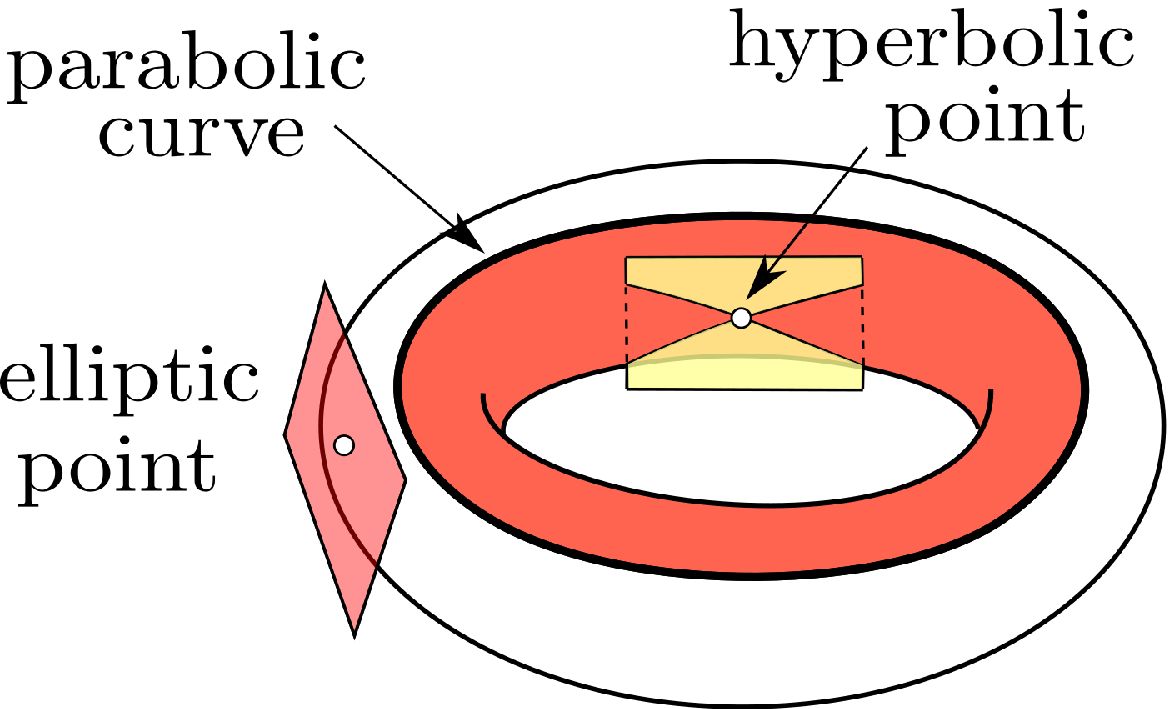}
\caption{\small A generic torus.}
\label{genric-torus}
\end{minipage}
\end{figure}

\paragraph{2.3 Positive and Negative Godrons.} 
A godron is said to be {\em positive} or {\em of index $+1$} 
(resp.~{\em negative} or {\em of index $-1$}) if, at the neighbouring parabolic 
points, the half-asymptotic lines directed to the hyperbolic domain point 
towards (resp.~away from) the godron\,:

\begin{figure}[h] 
\centering
\includegraphics[scale=0.17]{}
\caption{\small A positive and a negative godron.}
\label{index-godron}
\end{figure}

Many characterisations and several local and global properties of positive and negative
godrons (and swallowtails) are geometrically described in \cite{Uribegodron}.

\paragraph{2.4 Tangential Map, Godrons and Swallowtails.}\label{fronts}

The {\em tangential map} of a smooth surface $S$, $\tau_S:S\rightarrow (\RP^3)^\vee$,
associates to each point of $S$ its tangent plane at that point.
The image $S^\vee$ of $\tau_S$ is called the {\em dual surface of $S$}.

It is known (cf. \cite{Salmon}) that under the tangential map of $S$ 
{\itshape the parabolic curve of $S$ corresponds to the cuspidal edge of $S^\vee$, 
a godron corresponds to a swallowtail point, and the elliptic (hyperbolic)
  domain of $S$ to the elliptic (resp. hyperbolic) domain of $S^\vee$}.
\medskip

A swallowtail point of a generic front is said to be {\em negative} ({\em positive}) 
if, locally, its self-intersection line is contained in the hyperbolic 
(resp. elliptic) domain. 

\noindent 
\textit{The dual of a surface at a positive (negative) godron is a positive (resp. negative) swallowtail.}
(see Fig.\,\ref{duality})

\begin{figure}[h] 
\centering
\includegraphics[scale=0.32]{}
\caption{\small Duality godron$\,\leftrightarrow\,$swallowtail for positive and negative godrons.}
\label{duality}
\end{figure}

Thus godrons of surfaces of $\RP^3$ ``are the analogs'' of inflections of curves of $\RP^2$.

\paragraph{2.5 Local Transitions and Charateristic Points.}\label{Local-transitions}

If the surface depends on one real parameter (say the time) the configuration formed by the
characteristic points, and the parabolic and flecnodal curves may change. 

Ellipnodes, hyperbonodes and godrons are crucial in the transitions of the parabolic curve of
evolving smooth surfaces, and in the transitions of wave fronts occurring in generic
$1$-parameter families \cite{Uribeevolution}\,:
{\itshape every Morse transition of the parabolic curve takes place at an ellipnode 
  which is replaced by a hyperbonode (or the opposite). At such transition
  two godrons of equal signs born or die} (Fig.\,\ref{metamorphosis}-left).

\begin{figure}[h] 
\centering
\includegraphics[scale=0.37]{}
\caption{\small Examples where an ellipnode is replaced by a hyperbonode or vice-versa.}
\label{metamorphosis}
\end{figure}

Since the planes tangent to $S$ at the points of the flecnodal curve form the flecnodal curve of the 
dual surface $S^\vee$, and the projective dual to a hyperbonode (resp. ellipnode) 
is again a hyperbonode (resp. ellipnode) \cite{Uribetesis, Uribeevolution},  
we get that

  {\itshape Every $A_3$-transition of a wave front, where two swallowtail points 
    born or die, takes place at an ellipnode which is replaced by a
    hyperbonode (or the opposite). The two involved swallowtails have equal signs}
  (Fig.\ref{metamorphosis}-right). 
\medskip

\noindent 
\textbf{Finding the signs}. Corollary\,\ref{3Euler} implies that in both, a Morse transition of the parabolic curve of an evolving
smooth surface and an $A_3$-transition of an evolving wave front, the involved ellipnode and hyerbonode 
have equal signs.

Using Theorem\,\ref{main-theorem} one easily gets the signs of the godrons, ellipnodes and hyperbonodes
that take part in generic local transitions. For example, the hyperbonode of Fig.\,\ref{metamorphosis}-left
is negative because it appears after the Euler characteristic of the hyperbolic domain decreases by $1$,
and, for the same reason, both godrons are positive. 

In the same way, we get the signs of the characteristic points for the local transitions occurring in 
generic $1$-parameter families of smooth surfaces (found in \cite{Uribeevolution})  
in which both, godrons and projective umbilics, are involved (Fig.\,\ref{fig:signs-perestroikas}). 

\begin{figure}[h] 
\centering
\includegraphics[scale=0.3]{}
\caption{\small The signs of godrons and projective umbilics involved in generic local transitions.}
\label{fig:signs-perestroikas}
\end{figure}

\paragraph{2.6 On Normal Forms.}
Although tangential singularities had been studied in the 19th Century by Salmon, Cayley, Zeuthen
et al (cf. \cite{Salmon}), the normal forms of jets of generic surfaces at different kinds of points
was done in the 1980's \cite{Landis, Platonova}, while the normal forms of the
jets (up to order 5) of surfaces at points that appear in generic $1$- and $2$-parameter families of
smooth surfaces were just published in 2017 \cite{Toru}. 

The Landis-Platonova normal form for the $4$-jet at a godron is equivalent to
\begin{equation*}
z=\frac{y^2}{2}-x^2y+\frac{\rho x^4}{2}  \quad (\rho\neq 1)\,,\eqno(\mathcal{P})
\end{equation*}
where $\rho>1$ corresponds to positive godrons and $\rho<1$ to the negative ones.

According to Landis-Platonova's Theorem \cite{Landis, Platonova} and
Ovsienko-Tabachnikov's Theorem \cite{Ovsienko-Tabachnikov}, the $4$-jet of a surface at a
hyperbonode can be sent by projective transformations to the
respective normal forms
\begin{equation*}\label{Landis normal form}
  z=xy+\frac{1}{3!}(ax^3y+bxy^3)+\frac{1}{4!}(x^4\pm y^4)  \quad \ (ab\neq\pm 1)\,, \eqno(\text{L-P})
\end{equation*}
\begin{equation*}
  z=xy+\frac{1}{3!}(x^3y\pm xy^3)+\frac{1}{4!}(Ix^4+Jy^4) \quad \ (IJ\neq\pm 1)\,,\eqno(\text{O-T})
\end{equation*}

To encompass both normal forms we shall consider the ``prenormal'' form
\begin{equation*}
  z=xy+\frac{1}{3!}(ax^3y+bxy^3)+\frac{1}{4!}(Ix^4+Jy^4) \quad \ (IJ\neq ab)\,.)
\end{equation*}
where the genericity conditions on the parameter values ($a,b,I,J$) are imposed in order to avoid the
moment of creation/annihilation of two hyperbonodes. 

In the case of ellipnodes we shall consider the prenormal form
\begin{equation*}
  z=\frac{1}{2}(x^2+y^2)+\frac{1}{3!}(ax^3y+bxy^3)+\frac{1}{4}cx^2y^2+\frac{1}{4!}(Ix^4+Jy^4)\,.)
\end{equation*}
where the corresponding genericity condition is $(a-3b)(b-3a)\neq(I-3c)(J-3c)$. 


\section{Fundamental cubic form}\label{section-FCF}
In this section we introduce an important local projective invariant of a
surface, the so called fundamental cubic form. The second fundamental
form describes the quadratic deviation of a surface from its tangent
plane. In the same way, the fundamental cubic form describes the cubic
deviation of the surface from its quadratic part.

Let $S$ be a smooth surface generically embedded to the affine $3$-space.
Assume that it is given in Monge form $z=f(x,y)$. 
Consider the Taylor expansion of $f$,
\[f(x,y)=Q(x,y)+C(x,y)+\dots,\]
where $Q$ is homogeneous of degree~$2$ and $C$ of degree~$3$. 
Observe that linear changes of the coordinates~$x$ and~$y$
preserve the homogeneous forms regarded as forms on the tangent plane. We discuss the ambiguity in the
definition of these forms for more general affine changes of coordinates in the ambient space.
\medskip

\noindent
\textit{\textbf{\small Second Fundamental Form}}. 
It is easy to see that $Q$ provides a correctly defined quadratic form on the tangent plane 
taking values on the ``normal line'' $\nu_pS=T_p\RP^3/T_pS$. It is called \textit{second fundamental form} 
of the surface. However, if we treat this form as taking values in real numbers, it is defined up to a factor only. It is sign definite, indefinite or degenerate if the surface is respectively elliptic, hyperbolic or parabolic at the corresponding point.
\medskip

The action of affine changes on the cubic part of the Taylor expansion is more complicated.
Namely, the cubic term $C$ provides a cubic form on the tangent plane up to a factor and up to a
summand of the form $QL$ where $L$ is linear. Theorem\,\ref{theorem:FCF-expression} below claims that, given 
$Q$, the family of cubic forms $C+QL$ for all possible choices of $L$ has a canonical 
representative that we call \emph{fundamental cubic form}. 
This means that the quadratic form $Q$ produces a splitting of the space of cubic forms in two parts: 
\medskip

\noindent 
\textbf{Canonical splitting of the space of cubic forms}. 
Given a nondegenerate quadratic form $Q$ on an abstract $2$-dimensional vector space $V\approx\R^2$,   
the ``convolution with the inverse of $Q$'' 
is an operation on the homogeneous forms which lowers the degree by two. 
In coordinates, if $Q=a\,x^2+2b\,x\,y+c\,y^2$, this operation 
acts as the second order differential operator
\begin{equation}\label{Qstar}
\Lambda:=c\pd{^2}{x^2}-2b\pd{^2}{x\D y}+a\pd{^2}{y^2}\,.
\end{equation}

Observe that $\left(\frac{1}{4(ac-b^2)}\right)\Lambda\,Q=1$ and that 
this ``convolution operation'' determines the surjective linear map $\,\l_Q:\Sym^3V^*\to\Sym^1V^*\,$ given by 
$\,\l_Q(C)=\Lambda\,C$.  
\medskip

\noindent 
\textbf{Splitting Lemma}. 
{\itshape
The form $Q$ determines a canonical splitting of the $4$-dimensional 
space of cubic forms as a direct sum of two $2$-dimensional subspaces, 
\begin{equation}\label{split}
\Sym^3V^*=U_Q^+\oplus U_Q^-\,,
\end{equation}
where $U_Q^+$ consists of the cubic forms multiples of $Q$ {\rm (i.e. written as $QL$ 
with $L$ linear)} and $U_Q^-$ is the kernel of the surjective linear map
$\l_Q:\Sym^3V^*\to\Sym^1V^*$.}

\begin{proof}
To prove that $U_Q^+$ and $U_Q^-$ are transversal, it is sufficient to check this fact for $Q$ in a normal
form, one for the hyperbolic case and one for the elliptic one.

In the hyperbolic case, we choose coordinites such that $Q=x\,y$, then $U_Q^+$ is spanned
by $x^2y$ and $x\,y^2$, and $U_Q^-$ is spanned by $x^3$ and $y^3$.

In the elliptic case, we choose coordinates such that $Q=\pm(x^2+y^2)$.
Then $U_Q^+$ is spanned by $x^3+x\,y^2$ and $x^2y+y^3$ while $U_Q^-$ is spanned by $x^3-3x\,y^2$ and $3\,x^2y-y^3$.

Therefore the splitting~\eqref{split} holds true for any nondegenerate quadratic form $Q$. 
\end{proof}

Now we assume $V$ is the tangent space to the surface at a non-parabolic point\,: 
\smallskip

\noindent
\textit{\textbf{\small Fundamental Cubic Form}}. 
\emph{The fundamental cubic form} $W$ of a surface (FCF) is the homogeneous
degree 3 form (on the tangent plane $V$) obtained as the projection of the form $C$ 
to the space $U^-_Q$ along the space $U^+_Q$.
\smallskip

This definition leads to the following explicit formula:

\begin{theorem}[proved below]\label{theorem:FCF-expression}
If $f(x,y)=Q(x,y)+C(x,y)+\ldots$ with $Q$ non degenerate, the FCF is given by 
\begin{equation}\label{Formula1-W}
\what{W}=C-Q\,\frac{dH}{4H}\,,
\end{equation}
where $H(x,y)=f_{20}(x,y)f_{02}(x,y)-f_{11}^2(x,y)$ and $dH$ is the linear part of $H$.
\end{theorem}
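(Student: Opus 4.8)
The plan is to work directly from the definition of $\widehat W$ as the projection of $C$ onto $U_Q^-=\ker\lambda_Q$ along $U_Q^+=\{\,QL : L \text{ linear}\,\}$, which is a legitimate operation by the Splitting Lemma. Thus $\widehat W = C - QL$ for the unique linear form $L$ characterised by the condition $C-QL\in\ker\Lambda$, that is, by $\Lambda(QL)=\Lambda C$. The whole theorem then reduces to solving this single linear equation for $L$ and recognising the answer as $dH/(4H)$.

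I would first establish the ``eigenvalue'' identity that makes the projection explicit. For $Q=ax^2+2bxy+cy^2$ and an arbitrary linear form $\ell=px+qy$, a direct application of $\Lambda=c\,\partial_{xx}-2b\,\partial_{xy}+a\,\partial_{yy}$ to the cubic $Q\ell$ gives $\Lambda(Q\ell)=8(ac-b^2)\,\ell=2H(0,0)\,\ell$, since $H(0,0)=f_{20}f_{02}-f_{11}^2=4(ac-b^2)$. This sharpens the transversality already granted by the Splitting Lemma: the map $\ell\mapsto\Lambda(Q\ell)$ is simply multiplication by the nonzero scalar $2H(0,0)$, so the equation $\Lambda(QL)=\Lambda C$ has the unique solution $L=\Lambda C/\big(2H(0,0)\big)$.

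Second, and this is the crux, I would compute $\Lambda C$ for $C=\tfrac16 f_{30}x^3+\tfrac12 f_{21}x^2y+\tfrac12 f_{12}xy^2+\tfrac16 f_{03}y^3$ and check that it coincides, up to the factor $\tfrac12$, with the linear part $dH$ of the Hessian function. Concretely, $\partial_{xx}C=f_{30}x+f_{21}y$, $\partial_{xy}C=f_{21}x+f_{12}y$, $\partial_{yy}C=f_{12}x+f_{03}y$, and substituting $a=\tfrac12 f_{20}$, $b=\tfrac12 f_{11}$, $c=\tfrac12 f_{02}$ yields $\Lambda C=\tfrac12\big[(f_{02}f_{30}-2f_{11}f_{21}+f_{20}f_{12})\,x+(f_{02}f_{21}-2f_{11}f_{12}+f_{20}f_{03})\,y\big]$. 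Differentiating $H=f_{20}(x,y)f_{02}(x,y)-f_{11}(x,y)^2$ at the origin produces exactly the same two bracketed coefficients for $dH$, so $\Lambda C=\tfrac12\,dH$.

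Combining the two steps gives $L=\Lambda C/\big(2H(0,0)\big)=(\tfrac12 dH)/\big(2H(0,0)\big)=dH/(4H)$, whence $\widehat W=C-Q\,dH/(4H)$, as claimed; clearing the denominator recovers the form $W=4HC-Q\,dH$ used in the introduction. The only genuine obstacle is the second step: one has to notice that the first-order Taylor coefficients of the Hessian function are precisely the combinations produced by applying the inverse-convolution operator $\Lambda$ to the cubic term. Everything else is derivative bookkeeping, and the elliptic/hyperbolic distinction never intervenes, since both identities hold verbatim for any nondegenerate $Q$.
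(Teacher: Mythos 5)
Your proof is correct: both computations check out ($\Lambda(Q\ell)=8(ac-b^2)\,\ell=2H_Q\,\ell$ for any linear $\ell$, and $\Lambda C=\tfrac12\,dH$), and the logic is sound. It also shares the paper's overall strategy: realise $\what W$ as the projection of $C$ onto $\ker\lambda_Q$ along $U_Q^+$ and pin down the unique linear form $L$ with $\Lambda(C-QL)=0$; in fact your second step is exactly the paper's Lemma\,\ref{lemma:2LC=dH}, which the paper dismisses as ``a simple calculation'' --- you supply that calculation. Where you genuinely diverge is in how the action of $\Lambda$ on multiples of $Q$ is controlled, and in the direction of the final argument. The paper proves Lemma\,\ref{Lemma:dH=4HL} by computing the Hessian of the auxiliary surface $z=Q+QL$, obtaining $dH_{Q+QL}=4H_QL$, and then \emph{verifies} that the candidate $C-Q\,dH/(4H)$ is annihilated by $\Lambda$, applying Lemma\,\ref{lemma:2LC=dH} a second time to the cubic $Q\,\Lambda C$. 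You instead prove the eigenvalue identity $\Lambda(Q\ell)=2H_Q\,\ell$ by direct differentiation --- no auxiliary surface, no Hessian of a perturbed quadric --- and \emph{solve} the equation $\Lambda(QL)=\Lambda C$; the two routes are equivalent, since the paper's two lemmas applied to $\what C=QL$ combine to give precisely your identity. Your organisation buys two things: uniqueness of $L$ (hence well-definedness of the projection) is explicit, because $\ell\mapsto\Lambda(Q\ell)$ is multiplication by a nonzero scalar; and the same identity re-proves the Splitting Lemma with no recourse to normal forms, since it shows $\lambda_Q$ is surjective and injective on $U_Q^+$, so $\Sym^3V^*=U_Q^+\oplus\ker\lambda_Q$ by dimension count. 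What the paper's longer route retains is the geometric content of Lemma\,\ref{Lemma:dH=4HL} itself --- how the Hessian function of a surface responds to replacing $Q$ by $Q+QL$ --- which is the form of the fact most directly tied to the surrounding surface geometry.
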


In the parabolic case (i.e. when $Q$ is degenerate) there is no splitting because
in that case $U_Q^-=U_Q^+$. {\footnotesize 
[For example, if $Q=cy^2$, then $\Lambda=c\D_x^2$ so that $U_Q^-$ is spanned by 
$\{x\,y^2, y^3\}$, coinciding thus with $U_Q^+$.]}
It follows that our definition and expression\,\eqref{Formula1-W} are not applicable 
at the parabolic points. 
But since the cubic form is defined up to a factor, we rescale the form \eqref{Formula1-W} to 
\begin{equation}\label{Coord-Formula}
W=4H C-Q dH
\end{equation}
extending it continuously (and unambiguously) to the parabolic points. 
Thus we consider \eqref{Coord-Formula} as the coordinate expression 
of the fundamental cubic form for all points. 

{\footnotesize
\begin{remark*}
In an abstract vector space $V\simeq \R^2$, the canonical representative of the cubic 
forms $C+QL$ for all possible choices of $L$, defined up to a factor,  is the form 
\begin{equation*}
W=4H_QC-2\,Q\,\Lambda C\,,
\end{equation*}
where $H_Q=4(ac-b^2)$ is the Hessian of the quadratic form $Q$.
\end{remark*}
}

\noindent
\textit{\textbf{\small Projective Umbilics} or \textbf{\small Nodes}}. 
  A non parabolic point of a generic surface (i.e., $Q$ is nondegenerate) is called
  \textit{projective umbilic} or \emph{node} (\textit{ellipnode} or \textit{hyperbonode}) if
  the cubic form~$C$ is divisible by $Q$, that is, $C=Q L$ for some
linear function~$L$.

\begin{theorem}\label{thfund3} \ 
Let $W$ be the fundamental cubic form of a smooth surface. 
\begin{enumerate}[ parsep=0.124cm, itemsep=0.0cm,topsep=0.2cm]\itemsep=0.0cm  
\item[{\rm 1.}] The lines of zeroes of the form~$W$ are well defined tangent lines on the surface;

\item[{\rm 2.}] The form $W$ vanishes at the projective umbilics (ellipnodes or hyperbonodes); 

\item[{\rm 3.}] At every elliptic point which is not an ellipnode, the form $W$ has three distinct real lines of zeroes; 

\item[{\rm 4.}] At every hyperbolic point which is not a hyperbonode, the form $W$ has one real line of zeroes; 

\item[{\rm 5.}] At the parabolic points different from godrons the form $W$ has a double zero line which is also
  a double zero line of~$Q$, and a simple zero line tangent to the parabolic curve; 

\item[{\rm 6.}] At every godron the form $W$ has the triple zero line tangent to the parabolic curve.
\end{enumerate}
\end{theorem}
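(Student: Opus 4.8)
The plan is to treat the six assertions in three groups, according to whether they concern the intrinsic nature of $W$ (1--2), the count of real zero lines at generic non-parabolic points (3--4), or the degenerate behaviour along the parabolic curve (5--6). First I would dispose of assertions 1 and 2 directly from the construction of $W$ as the projection of $C$ onto $U_Q^-$ along $U_Q^+$. Under the admissible changes of coordinates the cubic term is defined only up to a nonzero scalar and up to a summand in $U_Q^+$; since $W$ is by definition the $U_Q^-$-component, and since the splitting \eqref{split} is natural with respect to these changes (the subspace $U_Q^+$ of multiples of $Q$ and the kernel $U_Q^-$ of $\l_Q$ both transform correctly under linear substitutions of $x,y$, while $\Lambda$ of \eqref{Qstar} transforms accordingly), the form $W$ is well defined up to an overall factor. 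Its set of zero lines is therefore an intrinsic field of tangent lines on the surface, which is assertion 1. Assertion 2 is then immediate: at a projective umbilic one has $C=QL\in U_Q^+$, so its $U_Q^-$-component vanishes; equivalently, in \eqref{Formula1-W} the form $C-Q\,\frac{dH}{4H}$ lies in $U_Q^-$ by Theorem~\ref{theorem:FCF-expression} and simultaneously in $U_Q^+$ (being a multiple of $Q$ at the node), hence in $U_Q^+\cap U_Q^-=\{0\}$.

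For assertions 3 and 4 I would compute in the normal forms of $Q$ supplied by the Splitting Lemma. In the elliptic case $Q=\pm(x^2+y^2)$, the operator $\Lambda$ of \eqref{Qstar} is $\pm$ the Laplacian, so $U_Q^-$ is exactly the space of harmonic cubics, spanned by $x^3-3xy^2$ and $3x^2y-y^3$. Writing $w=x+iy$, every nonzero element of $U_Q^-$ equals $\operatorname{Re}(\bar c\,w^3)$ for some $c\in\mathbb C\setminus\{0\}$, whose vanishing locus $\cos(3\theta+\arg c)=0$ consists of three real directions spaced by $\pi/3$, hence three distinct lines; since $W\ne 0$ away from ellipnodes (by the node criterion, $W=0$ iff $C\in U_Q^+$), this gives assertion 3. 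In the hyperbolic case $Q=xy$ one finds $\Lambda=-\partial_x\partial_y$, so $U_Q^-$ is spanned by $x^3$ and $y^3$; thus $W=A\,x^3+B\,y^3$ with $(A,B)\ne(0,0)$ off the hyperbonodes, and $A\,x^3+B\,y^3=0$ has exactly one real zero direction (the remaining two roots being complex conjugate, and the degenerate subcases $A=0$ or $B=0$ giving a triple real line), proving assertion 4.

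The delicate part, and the main obstacle, is the parabolic behaviour in assertions 5 and 6, where the splitting collapses ($U_Q^+=U_Q^-$) and \eqref{Formula1-W} is unavailable. Here I would work with the rescaled, continuously extended expression \eqref{Coord-Formula}, $W=4HC-Q\,dH$. At a parabolic point $H=0$, so the cubic term drops out and $W=-Q\,dH$. Normalising the degenerate quadratic form as $Q=c\,y^2$, whose double zero line $\{y=0\}$ is the double asymptotic direction, we obtain $W=-c\,y^2\,dH$. The remaining simple factor $dH$ is precisely the linear form cutting out the tangent to the parabolic curve $\{H=0\}$, so its zero line is tangent to that curve. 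The point is a godron exactly when this tangent coincides with the double asymptotic line, that is, when $dH$ is proportional to $y$. Off a godron $dH$ is independent of $y$ and nonzero, so $W$ has a double zero line $\{y=0\}$ (the double zero of $Q$) together with a distinct simple zero line $\{dH=0\}$ tangent to the parabolic curve, which is assertion 5; at a godron $dH$ is proportional to $y$, whence $W$ is proportional to $y^3$, a triple zero line $\{y=0\}$ that is simultaneously the double asymptotic line and the tangent to the parabolic curve, which is assertion 6. The subtlety to verify carefully is that the continuous extension of $W$ to the parabolic curve genuinely reduces to $-Q\,dH$, and that $dH\ne 0$ there for generic surfaces, so that the whole distinction between the double-plus-simple and the triple configurations is governed solely by whether $dH\parallel y$.
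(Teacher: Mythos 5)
Your proposal is correct and follows essentially the same route as the paper's own proof: items 1--2 from the intrinsic splitting $\Sym^3V^*=U_Q^+\oplus U_Q^-$, items 3--4 by the same normal-form computations (harmonic cubics spanned by $x^3-3xy^2$, $3x^2y-y^3$ in the elliptic case; $x^3$, $y^3$ in the hyperbolic case), and items 5--6 by observing that the rescaled form \eqref{Coord-Formula} reduces to $\pm Q\,dH$ on the parabolic curve. The only difference is that you spell out the details the paper leaves implicit (the counting of real roots via $\operatorname{Re}(\bar c\,w^3)$ and the factorization $Ax^3+By^3$, and the dichotomy $dH\parallel y$ at godrons), which is a welcome elaboration rather than a departure.
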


\noindent
\begin{proof} Item~1 follows because the form $W$ is well defined up to a factor. 

Item~2 follows because at the projective umbilics the form $C$ lies, by definition, in the kernel 
of the projection  to $U_Q^-$ along $U_Q^+$.

At a parabolic point we have $H=0$ so that $W=Q\,dH$, which implies immediately the statements of
item~1 as well as of items~5 and~6. 

To prove items 3 and 4, it is sufficient to check them for $Q$ in a normal form: 

In the elliptic case, we choose coordinates such that $Q=\pm(x^2+y^2)$.
Then $U_Q^-$ is spanned by $x^3-3x\,y^2$ and $3\,x^2y-y^3$ and 
assertion 3 follows from the fact that any nonzero combination of the forms $x^3-3x\,y^2$ and
$3\,x^2y-y^3$ has three real zero lines.

In the hyperbolic case, we choose coordinates such that $Q=x\,y$. 
Then $U_Q^-$ is spanned by $x^3$ and $y^3$ and assertion 4 follows from the fact that any 
nonzero combination of the forms $x^3$ and $y^3$ has one real zero line.
Theorem~\ref{thfund3} is proved.
\end{proof}

\begin{remark*}
The fundamental cubic form of a surface is actually a known and well studied object 
in affine differential geometry, see, e.g.,~\cite{NS}. It is defined by
\begin{equation}\label{aff-cubic-form}
  \mathfrak{C}(X,Y,Z)=(\nabla_X h)(Y,Z)\,,
\end{equation}
where $\nabla$ is the connection on the (tangent bundle of the) surface defined by its canonical 
Blaschke structure associated with the affine embedding, and $h$ is the quadratic fundamental form 
with a normalisation that differ from our form $Q$ by a factor, see details in~\cite{NS}. 
Namely, for a surface of the form $z=f(x,y)$ we have explicitly
\begin{align*}
h&=H^{-1/4}Q,\\
\mathfrak{C}&=H^{-1/4}\left(C-\frac14 Q d\log H \right).
\end{align*}

We see the forms $h$ and $\mathfrak{C}$ agree with our respective forms $Q$ and $W$ up to a factor.
The statements 2--4 of Theorem~3 are also known, see~\cite[Sect. II.11]{NS}. 

Observe, however, that our approach to the definition of the 
fundamental cubic form is completely different from that one of affine differential geometry. 
It is important to notice also that, up to a factor, \emph{the quadratic and the cubic fundamental forms 
are well defined local invariants of the surface which come from the projective structure of 
the ambient space}, rather than the affine one.  
Besides, the asymptotic behavior of the field of zeroes of the cubic form on the parabolic line 
subject to statements 5--6 of Theorem\,\ref{thfund3} has not been studied before, to our knowledge. 
In fact, the definition of the cubic form~\eqref{aff-cubic-form} assumes the nondegeneracy of $h$ and 
is not applicable to parabolic points, in contrast to our definition of the (normalised) form~$W$.
\end{remark*}

\subsection{Proof of Theorem\,\ref{theorem:FCF-expression}}

Notice that if $f(x,y)=Q(x,y)+C(x,y)+\ldots$ 
and $H_f$ denotes its Hessian function 
\[H_f(x,y)=f_{20}(x,y)f_{02}(x,y)-f_{11}^2(x,y)\,,\]
then at the origin we have $H_f(0,0)=H_Q=4(ac-b^2)$; 
and its differential depends only on $Q$ and $C$\,: $\,dH_f(0,0)=dH_{Q+C}$. 
Therefore in the following lemma, which characterises the cubic forms divisible by $Q$, 
we consider the functions $f(x,y)=Q(x,y)+C(x, y)$. 


\begin{lemma}\label{Lemma:dH=4HL}
If $f(x,y)=Q(x,y)+C(x, y)$ where $Q$ is a non degenerate quadratic form 
and $C=Q L$ with $L$ linear (i.e. the cubic form~$C$ is divisible by $Q$) then at the origin 
\begin{equation}\label{dH=4HL}
dH_{Q+QL}=4H_QL\,,
\end{equation}
where $dH_{Q+QL}$ is the linear part of the function~$H_{Q+QL}$ and $H_Q=4(ac-b^2)$ is the Hessian of 
the non degenerate quadratic form $Q$.
\end{lemma}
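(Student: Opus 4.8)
The plan is to funnel everything through the convolution operator $\Lambda$ of \eqref{Qstar} and the identity $\Lambda Q=H_Q$ already recorded just after that formula. First I would compute the linear part of the Hessian function for a \emph{general} cubic perturbation $f=Q+C$, and only afterwards specialise to $C=QL$. This keeps the algebra transparent and isolates the one fact that actually uses divisibility by $Q$.

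Writing the second derivatives as functions, $f_{20}=2a+C_{20}$, $f_{11}=2b+C_{11}$, $f_{02}=2c+C_{02}$, where the $C_{ij}$ are linear forms, I expand
\begin{equation*}
H_f=f_{20}f_{02}-f_{11}^2=(2a+C_{20})(2c+C_{02})-(2b+C_{11})^2.
\end{equation*}
The constant term is $4ac-4b^2=H_Q$, while the genuinely quadratic products $C_{20}C_{02}$ and $C_{11}^2$ do not contribute to the linear part. Hence the differential at the origin is
\begin{equation*}
dH_f=2c\,C_{20}+2a\,C_{02}-4b\,C_{11}=2\,\Lambda C,
\end{equation*}
the last equality being exactly the definition \eqref{Qstar}. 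This single computation is coordinate-free in spirit and reduces the lemma to the purely algebraic identity $\Lambda(QL)=2H_QL$ for every linear form $L$.

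For that identity I would use the Leibniz rule, noting that all second derivatives of $L$ vanish, so every term in $\Lambda(QL)$ carries at least one derivative of $Q$:
\begin{equation*}
\Lambda(QL)=(\Lambda Q)\,L+2\left(c\,Q_xL_x-b(Q_xL_y+Q_yL_x)+a\,Q_yL_y\right).
\end{equation*}
The first summand is $H_QL$ because $\Lambda Q=H_Q$. For the second, I substitute $Q_x=2ax+2by$, $Q_y=2bx+2cy$ and the constants $L_x,L_y$, then collect the coefficients of $x$ and of $y$; both reassemble into $2(ac-b^2)$ times the corresponding coefficient of $L$, so the bracket equals $2(ac-b^2)L$ and the whole cross term equals $4(ac-b^2)L=H_QL$. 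Therefore $\Lambda(QL)=2H_QL$, and combining with the first step gives $dH_{Q+QL}=2\Lambda(QL)=4H_QL$, which is \eqref{dH=4HL}.

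The one place demanding care is this cross-term reassembly: the mixed contributions depend a priori on all of $a,b,c$ and on both coefficients of $L$, and one must verify that they collapse to a scalar multiple of $L$ with coefficient exactly $H_Q$, rather than leaving a spurious term proportional to some other linear form. If one prefers to bypass this bookkeeping, the identity $\Lambda(QL)=2H_QL$ can instead be checked on the two representatives $Q=xy$ and $Q=\pm(x^2+y^2)$ and then extended to all nondegenerate $Q$ by the same naturality of $\l_Q$ and of multiplication by $Q$ under linear coordinate changes that underlies the Splitting Lemma; the first step $dH_f=2\Lambda C$ needs no such reduction.
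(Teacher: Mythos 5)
Your proof is correct, and it takes a genuinely different route from the paper's. The paper proves Lemma~\ref{Lemma:dH=4HL} in one direct computation: writing $f=Q(1+L)$ with $Q=ax^2+2bxy+cy^2$ and $L=ux+vy$, it expands the second derivatives by the product rule, $f_{20}=2a(L+1)+2uQ_x$, $f_{02}=2c(L+1)+2vQ_y$, $f_{11}=2b(L+1)+vQ_x+uQ_y$, forms $H_f$, and reads off the linear part at the origin, which collapses to $16(ac-b^2)(ux+vy)=4H_QL$; the operator $\Lambda$ never appears. You instead factor the statement into two independent facts: (i) the general formula $dH_{Q+C}=2\Lambda C$ for an \emph{arbitrary} cubic $C$ --- which is literally the paper's Lemma~\ref{lemma:2LC=dH}, stated there separately without proof (``a simple calculation'') --- and (ii) the Leibniz-rule identity $\Lambda(QL)=2H_QL$; your cross-term bookkeeping in (ii) checks out, the bracket collapsing to $2(ac-b^2)L$ with no spurious linear form left over. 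What your route buys: it isolates exactly where divisibility by $Q$ enters (only in (ii)), it proves the paper's second lemma en route, and it makes the step \eqref{relation-L_Q(QL_QC)=2H_QL_QC} in the paper's proof of Theorem~\ref{theorem:FCF-expression} immediate --- note that the paper obtains $\Lambda(Q\,\Lambda C)=2H_Q\Lambda C$ there by chaining Lemmas~\ref{lemma:2LC=dH} and~\ref{Lemma:dH=4HL} in exactly the reverse logical direction to your argument. What the paper's route buys is brevity and self-containedness for this single statement. Your fallback of checking (ii) on the normal forms $Q=xy$ and $Q=\pm(x^2+y^2)$ is also sound, since under a linear substitution $T$ both sides of $\Lambda(QL)=2H_QL$ rescale by the same factor $(\det T)^2$, so the identity is indeed orbit-invariant.
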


\begin{proof}
If $f=Q+QL$ with $Q(x,y)=ax^2+2bxy+cy^2$ and $L(x,y)=ux+vy$, then 
\[H_f(x,y)=\left(2a(L+1)+2uQ_x\right)\left(2c(L+1)+2vQ_y\right)-\left(2b(L+1)+vQ_x+uQ_y\right)^2\,.\]
Since $L=Q_x=Q_y=0$ at the origin, the linear part of the Hessian at the origin is given by 
$dH_f=4^2(ac-b^2)ux+4^2(ac-b^2)vy$, that is $dH_f=4H_QL$. 
\end{proof}

Lemma\,\ref{Lemma:dH=4HL} suggests the canonical representative $W$ of the cubic 
forms $C+QL$ for all possible choices of $L$ should satisfy $dH_{Q+W}=0$. 
A simple calculation proves the 

\begin{lemma}\label{lemma:2LC=dH} 
Let $f=Q+\what{C}$ with $Q$ non degenerate. 
Then at the origin we have
\begin{equation}\label{relation-LQ=dH}
\Lambda\what{C}=\frac{1}{2}dH_{Q+\what{C}}\,, 
\end{equation}
where $\Lambda$ is the operator \eqref{Qstar} determined by $Q$. 
\end{lemma}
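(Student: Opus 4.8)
The plan is to prove \eqref{relation-LQ=dH} by a direct computation of the degree-one part of the Hessian function $H_{Q+\what C}$ at the origin, reading off the answer and recognising it as $2\Lambda\what C$.

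First I would record the second partial derivatives of $f=Q+\what C$. Writing $Q=ax^2+2bxy+cy^2$, the quadratic part contributes the constants $Q_{20}=2a$, $Q_{02}=2c$, $Q_{11}=2b$, whereas each second derivative of the homogeneous cubic $\what C$, namely $\what C_{20}$, $\what C_{02}$, $\what C_{11}$, is a linear form vanishing at the origin. Hence
\[
f_{20}=2a+\what C_{20},\qquad f_{02}=2c+\what C_{02},\qquad f_{11}=2b+\what C_{11}.
\]

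Next I would substitute these into $H_f=f_{20}f_{02}-f_{11}^2$ and sort the result by degree. The degree-zero part is $4ac-4b^2=H_Q$, in agreement with the observation preceding the lemma. The degree-one part $dH_f$ is collected from the cross terms: the product $f_{20}f_{02}$ contributes $2c\,\what C_{20}+2a\,\what C_{02}$, while $-f_{11}^2=-(2b+\what C_{11})^2$ contributes $-4b\,\what C_{11}$. Therefore
\[
dH_{Q+\what C}=2c\,\what C_{20}-4b\,\what C_{11}+2a\,\what C_{02}
=2\bigl(c\,\what C_{20}-2b\,\what C_{11}+a\,\what C_{02}\bigr).
\]

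Finally I would identify the parenthesised expression with $\Lambda\what C$, where $\Lambda$ is the operator \eqref{Qstar} determined by $Q$; this yields $dH_{Q+\what C}=2\Lambda\what C$, which is exactly \eqref{relation-LQ=dH}. The computation is entirely elementary, so there is no real conceptual obstacle; the only point requiring care is the bookkeeping of the constant factors — the factor $2$ in $Q_{20}=2a$ and its companions, and the coefficient $-2b$ in $\Lambda$, which must emerge precisely from the linear term of $-f_{11}^2$.
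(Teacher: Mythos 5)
Your proof is correct, and it is precisely the ``simple calculation'' that the paper invokes without writing out: expanding $H_f=f_{20}f_{02}-f_{11}^2$ for $f=Q+\widehat{C}$ and isolating the degree-one part gives $dH_{Q+\widehat{C}}=2\bigl(c\,\widehat{C}_{20}-2b\,\widehat{C}_{11}+a\,\widehat{C}_{02}\bigr)=2\Lambda\widehat{C}$, with the constant-term check $H_f(0,0)=4(ac-b^2)=H_Q$ confirming the normalisation. Nothing further is needed.
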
 


\begin{proof}[\textbf{Proof of Theorem\,{\rm\ref{theorem:FCF-expression}}}]
From the Lemmas\,\ref{Lemma:dH=4HL} and \ref{lemma:2LC=dH} we get $(1/4H)QdH=(1/2H_Q)\,Q\,\Lambda C$. 
Then the linearity of the map $\l_Q$ applied to $\what{W}$ (see \eqref{Formula1-W}) provides the equality 
\begin{equation}\label{linearity-L_Q}
\Lambda\left(\,C-(1/2H_Q)\,Q\,\Lambda C\,\right)=\Lambda C-(1/2H_Q)\,\Lambda(Q\,\Lambda C)\,.
\end{equation} 
So relation\,\eqref{relation-LQ=dH} applied to $\what{C}=Q\,\Lambda C$ and then relation\,\eqref{dH=4HL} applied to $L=\Lambda C$ provide
\begin{equation}\label{relation-L_Q(QL_QC)=2H_QL_QC}
(1/2H_Q)\,\Lambda(Q\,\Lambda C)=(1/2H_Q)\frac{1}{2}dH_{Q+Q\,\Lambda C}=(1/4H_Q)4H_Q\,\Lambda C\,.
\end{equation}

Finally, equalities \eqref{linearity-L_Q} and \eqref{relation-L_Q(QL_QC)=2H_QL_QC} 
imply that $\what{W}$ is annihilated by the operator $\Lambda$. 

The definition of the FCF implies that a change of $C$ by a 
summand of the form $QL$ (i.e., lying in $U^+$) does not change its image under the projection. 
This proves Theorem\,\ref{theorem:FCF-expression}, in the non-parabolic case, 
because the splitting~\eqref{split} is defined intrinsically.
\end{proof}

\section{Poincar\'e-Hopf theorem for multivalued line fields}\label{Section-Poincare-Hopf}

The classical Poicar\'e-Hopf theorem claims that the sum of indices of singular points of a vector field~$v$
on a compact smooth $n$-dimensional manifold equals the Euler characteristic of that manifold. This equality
holds true also for a manifold with boundary if the field is transverse to the boundary and directed, say,
outside the manifold at every its boundary point. The index is defined as the degree of the mapping
$S^{n-1}\to S^{n-1}$ where the source sphere is the boundary of a small ball on the manifold centred at a
singular point of the field, and the mapping is given by $x\mapsto v(x)/\|v(x)\|$. In the case of a surface ($n=2$)
the index can be treated also as the number of rotations of the vector $v(x)$ about the origin while
the point $x$ makes one turn rotation around the singular point of the field.

In this section, we extend, following the idea of \cite{Kazarian}, the statement of the Poincar\'e-Hopf 
theorem to the case of ``multivalued fields of lines'' on a surface.
\medskip

\noindent 
\textit{\textbf{\small k-Valued Line Fields}}. A \emph{$k$-valued line field} $\t$ on a surface is a correspondence that
associates to a point on a surface an unordered $k$-tuple of pairwise distinct non oriented tangent lines.
We assume that this $k$-tuple of tangent lines depends continuously on the point of the surface and is
defined for all but finitely many points on the surface.
We refer to the points where the $k$-valued line field is not defined as
the \emph{singular} points of this field.

If a point of the surface follows a loop, the continuity of the multivalued line field along this
loop leads to a (cyclic) permutation of its lines. 
\medskip

\noindent
\textit{\textbf{\small Fractional Index}}. 
The \textit{fractional index} of a singular point of a $k$-valued line field is the rational number $p/q$
such that each line of the field comes to its initial position with the same orientation and makes $p$ turns
of rotations while the point of the base makes $q$ turns around the singular point of the surface in positive
direction. It is an element of $(1/2k) \Z$. 
\medskip 

In order to apply this definition, one needs to fix a choice of the orientation of the surface in a neighbourhood
of a singular point of the field. However, the actual value of the fractional index is independent of this choice
and the equality holds independently of whether the surface is orientable or not. Indeed, a change of the orientation
of the surface changes orientations of both the source and the target circles of the mapping defining the index, 
thus preserving its value.


\begin{proposition}\label{indices-Euler}
For any closed surface and a multivalued field of lines on it, the sum of fractional indices of the singular
points of the field is equal to the Euler characteristic of the surface. 
\end{proposition}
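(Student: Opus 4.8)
The plan is to reduce the statement to the classical Poincaré–Hopf theorem by packaging the $k$ unoriented lines at each point into a single nonvanishing section of an auxiliary line bundle, and then reading off the Euler characteristic as the Euler number of that bundle. First I would equip $S$ with an auxiliary Riemannian metric and, in each local chart, an orientation, so that each tangent plane is identified with $\mathbb{C}$; an unoriented tangent line making angle $\theta$ with a local frame corresponds to the unit complex number $e^{2i\theta}$, which is well defined precisely because $\theta$ is determined only modulo $\pi$. Given the field $\tau$, whose value at a regular point $p$ is the unordered tuple of lines with angles $\theta_1,\dots,\theta_k$, I would form the symmetric product $\Phi(p)=\prod_{j=1}^k e^{2i\theta_j}=e^{2i(\theta_1+\cdots+\theta_k)}$. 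Being a symmetric function of the lines, $\Phi$ is independent of any local ordering of the tuple, so it defines a continuous unit-length section, over $S\setminus\Sigma$ (with $\Sigma$ the finite singular set), of the circle bundle of the complex line bundle $L=(TS)^{\otimes 2k}$.

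Next I would relate the fractional index to the winding number of $\Phi$. If a singular point $s$ has fractional index $p/q$, then as the base point runs $q$ times around $s$ each line returns to its initial position having rotated by $2\pi p$; hence, following the $k$ lines through the induced cyclic permutation, the symmetric sum $\theta_1+\cdots+\theta_k$ increases by $2\pi pk$ over those $q$ loops, that is, by $2\pi pk/q$ over a single loop. Consequently $\Phi$ has winding number $2k\cdot(p/q)$ about $s$, which is an integer exactly because the fractional index lies in $(1/2k)\Z$. The normalisation can be checked on the model $k=1$: a line-field singularity of index $1/2$ produces a section $\Phi$ of winding number $1$.

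I would then sum and conclude. Extending $\Phi$ across the points of $\Sigma$ to a section $\tilde\Phi$ of $L$ with isolated zeros, each zero lies in $\Sigma$ and carries local index equal to the winding number of $\Phi$ computed above. The standard obstruction-theoretic identity for sections of a complex line bundle gives $\sum_{s\in\Sigma}(\text{winding of }\Phi\text{ at }s)=c_1(L)[S]$. Since $c_1(L)=2k\,c_1(TS)$ and $c_1(TS)[S]=\chi(S)$, we obtain $2k\sum_s \ind_s(\tau)=2k\,\chi(S)$, whence $\sum_s \ind_s(\tau)=\chi(S)$.

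The step requiring most care is orientability: $S$ need not be orientable, so $TS$ carries no global complex structure and $L$ is only defined up to complex conjugation induced by the local orientation choices (the target circle of $\Phi$ and the ambient orientation flip together). I would handle this by passing to the orientation double cover $\pi:\tilde S\to S$, where all the constructions above are globally valid: the pulled-back field $\pi^*\tau$ has two singular points of equal fractional index over each singular point of $\tau$, and $\chi(\tilde S)=2\chi(S)$, so the equality established on $\tilde S$ divides by $2$ to give the claim on $S$. Alternatively, as already observed for the index itself, every quantity used is invariant under the simultaneous reversal of source and target orientations, so the chart-by-chart degree computation globalises directly. This orientation bookkeeping, together with the precise verification that the local winding number equals $2k$ times the fractional index, is the genuinely delicate part; the remaining identity, sum of local degrees equals $c_1(L)[S]$, is classical.
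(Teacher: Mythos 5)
Your proposal is correct, but it follows a genuinely different route from the paper. The paper only sketches its argument: it passes to a suitable \emph{ramified covering} of the surface on which the $k$-valued field becomes single-valued and oriented (following the idea of \cite{Kazarian}), and then applies the classical Poincar\'e--Hopf theorem upstairs; this requires constructing the branched cover and doing the Riemann--Hurwitz-type bookkeeping relating Euler characteristics and indices. You instead \emph{symmetrise} the field: a metric and a local orientation identify each tangent plane with $\mathbb{C}$, each unoriented line becomes $e^{2i\theta}$, and the unordered $k$-tuple becomes a single unit section $\Phi=e^{2i(\theta_1+\cdots+\theta_k)}$ of the complex line bundle $(TS)^{\otimes 2k}$ away from the singular set; the local winding of $\Phi$ equals $2k$ times the fractional index (your computation of this is right, and it also explains for free why the index lies in $(1/2k)\,\mathbb{Z}$), and summing local degrees gives $c_1\bigl((TS)^{\otimes 2k}\bigr)[S]=2k\,\chi(S)$, whence the claim after dividing by $2k$. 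Your treatment of non-orientability via the orientation double cover (indices invariant, $\chi$ doubles) is also valid and is in fact more carefully handled than in the paper, which only remarks that the index is orientation-independent; note, though, that your alternative remark that the ``chart-by-chart degree computation globalises directly'' on a non-orientable surface is not justified as stated, since $c_1(L)[S]$ has no meaning without a fundamental class --- but the double-cover argument makes this moot. In exchange for avoiding the ramified-cover construction, your argument uses the Euler-number/first-Chern-class formalism for line bundle sections, whereas the paper's covering trick stays entirely within the elementary Poincar\'e--Hopf framework; both reductions are legitimate, and yours has the merit of being written out in full while the paper's proof remains a one-sentence sketch.
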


This extension of the Poincar\'e-Hopf theorem can be proved, for example, by passing to a suitable ramified
covering surface such that the field becomes uni-valued and oriented on the covering surface, and by applying
the usual Poincar\'e-Hopf theorem to the covering surface.

\begin{remark*}
Even for a single-valued line field Proposition\,\ref{indices-Euler} is not formally equivalent to  
Poincar\'e-Hopf theorem if the field is not oriented; the index being half-integer.
\end{remark*}


\noindent 
\textit{\textbf{\small Triviality condition for surfaces with boundary}}. 
The statement of Proposition\,\ref{indices-Euler} can be extended also to the case of a surface with boundary.
We say that a multivalued direction field on a surface is \textit{trivialised along the boundary} if the boundary
contains no singular point and either the directions of the field are never tangent
to the boundary or if at each point of the boundary one of the branches of the field is tangent to the boundary.
Then the equality of Proposition\,\ref{indices-Euler} holds as well.

\section{Proof of Theorem\,\ref{main-theorem}}\label{Section-Proof-Theorem}

\noindent
\textbf{Proof of Theorem\,\ref{main-theorem}a}. 
Let~$S$ be a compact surface in the projective $3$-space and~$H$ be one
of the connected components of its hyperbolic domain. Take for~$\t$
the field of zeroes of the cubic fundamental form. By Theorem~\ref{thfund3},
we have $k=1$. 

The line field~$\t$ extends continuously to the boundary $\D H$.
The extended field is not transverse to the boundary, but the triviality 
condition formulated above is satisfied since the extended field is tangent 
to the parabolic curve~$\D H$ at \emph{each} of its points (including the godrons).
The internal singular points of~$\t$ are exactly the hyperbonodes and the local
computations (Proposition\,\ref{local-index-h} below) show that
\[\ind_h\t=\pm 1\,.\]
We obtain immediately the equality
\[
\,\# h(H)=\chi(H)\,. \eqno{\square}
\]

\noindent 
\textbf{Fractional index of singular points on the boundary}. 
Let us also extend the equality of Proposition\,\ref{indices-Euler} to the case when
the multivalued field of directions has singular points on the boundary.
Assume the triviality condition of the multivalued line field holds along the boundary,
except at a finite number of its points.
Take one of these points and pick a small disk centred at this point. Write $\gamma$ for the part of
the boundary of the disk lying in the surface, and $A$ and~$B$ for the endpoints of $\gamma$.  
Since $A$ and~$B$ belong to the part of the boundary of the surface where the field is trivialised, 
there is a homeomorphism of the tangent planes at the points $A$ and $B$ that identifies both values
of the direction field and the (cooriented) tangent direction of the boundary.
This identification provides a multivalued direction field along the closed path
$\g/(A\sim B)$.  
\medskip

\noindent
\textit{\textbf{\small Border Fractional Index}}. 
Define the \textit{fractional index of the multivalued field of lines at a singular point on the boundary}
as the index of the obtained closed path $\g/(A\sim B)$.
\smallskip

For example, using this definition, the fractioanl indices of the $2$-valued fields of lines 
of Fig.\,\ref{index-at-boundary} along the closed paths $\g/(A\sim B)$ are respectively $1/2$ and $-1/2$.

\begin{figure}[h] 
\centering 
\includegraphics[scale=0.32]{}
\caption{\small Boundary singular points with fractional indices $1/2$ (left) and $-1/2$ (right).}
\label{index-at-boundary}
\end{figure}

This definition is justified by the following theorem (with its proof). 

\begin{theorem}\label{Euler-extended}
For any multivalued direction field on a surface with boundary, the sum of indices of all singular points,
both internal and lying on the boundary, is equal to the Euler characteristic of the surface.
\end{theorem}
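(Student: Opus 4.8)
The plan is to reduce the statement to the boundary version of Proposition~\ref{indices-Euler} already established above (the case in which the triviality condition holds along the \emph{whole} boundary), by modifying the field in small neighbourhoods of the boundary singular points so as to restore that condition, and then checking that the index introduced by each modification is exactly the border fractional index. The surface $S$ itself stays fixed, so no Euler-characteristic bookkeeping is needed for the excision; all the work is in the local index matching.

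First I would fix, for each boundary singular point $q_i$, a small half-disk neighbourhood $N_i\subset S$ whose inner arc is the path $\g_i$ (with endpoints $A_i,B_i$) used in the definition of the border index, and whose outer arc $\beta_i=\partial S\cap N_i$ runs along the boundary through $q_i$. I would take the $N_i$ pairwise disjoint and free of interior singular points of $\t$, so that on $\g_i$ the field $\t$ is given and at $A_i,B_i$ the triviality condition holds. Next I would replace $\t$ inside each $N_i$ by a multivalued field $\t'$ that coincides with $\t$ on $\g_i$ (and with $\t$ outside $\bigcup_i N_i$) and that is trivialised along $\beta_i$. Such an extension exists: on the disk $N_i$ we may prescribe the boundary field (our data on $\g_i$ together with any trivialised field along $\beta_i$) and fill it in with finitely many isolated singularities in the interior. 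The resulting field $\t'$ on $S$ is then trivialised along the \emph{entire} boundary $\partial S$, so the boundary version of Proposition~\ref{indices-Euler} applies and gives
\[
\sum_{\text{interior sing.\ of }\t'}\ind=\chi(S).
\]

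The interior singularities of $\t'$ are those of $\t$ together with the singularities newly created inside the $N_i$, so it remains to identify $\sum_{\text{sing.\ in }N_i}\ind(\t')$ with $\ind_{\text{border}}(q_i)$. For this I would use that the total index of $\t'$ inside the disk $N_i$ equals the rotation of $\t'$ along $\partial N_i=\g_i\cup\beta_i$. Since $\t'=\t$ on $\g_i$, the contribution of $\g_i$ is the same for $\t'$ as for $\t$; and since $\t'$ is trivialised along $\beta_i$, traversing $\beta_i$ contributes precisely the identification homeomorphism $B_i\sim A_i$ appearing in the definition of $\ind_{\text{border}}(q_i)$. Hence the winding along $\partial N_i$ equals the index of the closed path $\g_i/(A_i\sim B_i)$, i.e.\ the border fractional index. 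Summing over $i$ and invoking the orientation-independence of the fractional index established above yields the theorem.

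The main obstacle is precisely this last matching: one must verify carefully that the contribution of the trivialised arc $\beta_i$ to the winding of $\t'$ coincides with the gluing homeomorphism used to define $\ind_{\text{border}}(q_i)$. Concretely, trivialisation along $\beta_i$ must be read as saying that the field undergoes no rotation relative to the moving frame given by the cooriented boundary tangent and conormal, so that the turning of the boundary tangent is not spuriously counted and the absolute winding along $\beta_i$ reproduces exactly the identification $B_i\sim A_i$. Once this is pinned down, the local computation agrees with the definition of the border index and the reduction is complete.
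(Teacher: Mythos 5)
Your proof is correct, but it runs the reduction in the opposite direction from the paper, and the comparison is instructive. Both arguments reduce Theorem~\ref{Euler-extended} to the trivialised-boundary case of Proposition~\ref{indices-Euler}; the difference is where the modification happens. The paper \emph{enlarges the surface}: it attaches an external collar along $\partial S$ and extends the field over the collar with no new singularities, trivialised along the new boundary. The boundary singular point then survives as an \emph{interior} singular point of the extended field, and its interior index equals the border fractional index essentially by definition --- the closed path $\gamma/(A\sim B)$ used to define the border index is realised as an honest small loop around that point in the enlarged surface, so no matching lemma is needed; one only observes that $\chi$ is unchanged because the collar retracts onto $\partial S$. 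You instead \emph{keep $S$ fixed and surger the field inward}: you destroy the boundary singularity by replacing $\tau$ on a half-disk $N_i$ with a field trivialised along $\beta_i$, creating new interior singularities, and you must then prove the matching lemma that the total index created inside $N_i$ equals the border index of $q_i$. This costs you two extra (standard but real) verifications that the paper's construction makes tautological: existence of an extension over the disk with finitely many singularities given the boundary data (cone construction, plus continuity of the prescribed data at the corners $A_i,B_i$, which needs the connectedness of the space of trivialised configurations), and the winding computation along $\partial N_i=\gamma_i\cup\beta_i$. You correctly isolate the latter as the crux, and your reading of trivialisation is the right one: along $\beta_i$ the relative angle of each branch with respect to the boundary frame is confined to an interval (it cannot cross tangency, or one branch is pinned at tangency), so the relative winding along $\beta_i$ is determined by the endpoint configurations alone and reproduces exactly the identification $B_i\sim A_i$ of the definition. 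In exchange, your version never modifies the surface, so no Euler-characteristic bookkeeping is needed at all, and it makes the index accounting fully explicit where the paper leaves it to a figure; both proofs are valid and rest on the same proposition and the same definition of the border fractional index.
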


\noindent
\emph{Proof}. Let us extend the surface by attaching a collar along the boundary which is a narrow strip
(Fig.\,\ref{attaching-collar}-centre).
The field can be extended to the collar in such a way that it becomes trivialised along the (new) boundary,
without singularities inside the collar, and the boundary singularity of the original field becomes a internal
singularity for the extended field (Fig.\,\ref{attaching-collar}-right). Thus, the situation of Proposition\,\ref{indices-Euler} is applied and leads to
the equality of Theorem\,\ref{Euler-extended}.

\begin{figure}[h] 
\centering
\includegraphics[scale=0.3]{}
\caption{\small Computation of the index at a boundary singular point.}
\label{attaching-collar}
\end{figure}

\begin{remark*}
The statement of Theorem\,\ref{Euler-extended} is applicable, for example, in the case of a vector field which is not
necessarily transverse to the boundary. A point of the boundary is `singular' for the field if it is an isolated
point where the field is tangent to the boundary. The index of such point is half-integer.
For example, the vector field $v=\partial_x$ on the unit disk $D={x^2+y^2\le 1}$ has no internal singular
points, but it has two boundary singular points of indices $1/2$ both, which gives $\chi(D)=1/2+1/2=1$.
\end{remark*}
\smallskip

\noindent
\textbf{Proof of Theorem\,\ref{main-theorem}b}. 
The only singularities of the $2$-valued field of asymptotic lines, on a connected component 
$H$ of the hyperbolic domain, are the godrons.

Consider the normal form $(\mathcal{P})$ near a godron: $f(x,y)=y^2/2-x^2y+\rho x^4/2$. 
The asymptotic lines are the zeroes of the second fundamental form 
\[Q=(-y+6\rho x^2)\,dx^2-4x\,dxdy+dy^2\,.\]
\textbf{Fact 1b}. {\itshape Near the godron the asymptotic lines are never parallel to the $y$-axis} 
(because the coefficient of $dy^2$ in $Q$ is constant) and {\itshape the sectors where 
$Q$ takes positive (or negative) values are symmetric with respect to the $y$-axis}: 
$Q_{(x,y)}(v_x,v_y)=Q_{(-x,y)}(-v_x,v_y)$. 

At the parabolic points near a positive godron $g^+$, the half-asymptotic lines directed to the 
hyperbolic domain point towards $g^+$ (see Fig.\,\ref{index-godron}). 
So at a hyperbolic point $A$ near $g^+$ with $x>0$ 
we have two close half-lines of zeroes that determine a thin sector directed to $H$ and to $g^+$  
(Fig.\,\ref{index-at-boundary}-left). Fact\,1b implies that after moving 
on $\g$ to a hyperbolic point $B$ with $x<0$, near $g^+$, the considered sector is directed away from 
$g^+$ and towards the elliptic side. 
Therefore the fractional index of the field at $g^+$ equals $1/2$. 

Exactly in the same way, one proves that the fractional index of the $2$-valued field of asymptotic lines 
at a negative godron equals $-1/2$ (see Fig.\,\ref{index-at-boundary}-right). 

Since the Euler characteristic of $H$ equals the sum of indices at the godrons on the boundary 
$\D H$ (by Theorem\,\ref{Euler-extended}), the algebraic sum of godrons on $\D H$ equals $2\chi(H)$. \fin
\medskip

{\footnotesize
\noindent 
One also can prove the equality $\# g(\D H)=2\chi(H)$ by using the double covering of the asymptotic 
directions (it is explained and used in \cite{Uribegodron}). }
\medskip

\noindent
\textbf{Proof of Theorem\,\ref{main-theorem}c}. 
Let $E$ be one of the connected components of the elliptic domain.
We are going to apply Theorem\,\ref{Euler-extended} to the line field $\t$ of zeroes of the
fundamental cubic form on $E$.
By Theorem~\ref{thfund3}, $\t$ is a $3$-valued line field that has singularities at the ellipnodes
(which are internal points of the elliptic domain) and at the godrons (which are boundary points
of that elliptic domain).
Thus, to obtain the Euler characteristic of $E$, we have to sum the contribution of the indices
of the ellipnodes in $E$, and that of the indices of the godrons lying on the boundary $\D E$. 
\smallskip




\noindent
\textbf{Fact  1c}. The local computations for the ellipnodes (Proposition\,\ref{local-index-e} below) show that
\begin{equation}\label{index-elliptic}
\ind_{e}\t=\pm\frac13\,.
\end{equation}

To find the local index of $\t$ at a godron, notice that the field~$\t$ extends continuously
to the boundary $\D E$, but the extended field does not satisfy the 
triviality condition: two of the directions glue together at the
boundary points and the third one becomes tangent to the boundary.
This degeneracy can be resolved by moving slightly from the
boundary point to a close internal point of~$E$.
Applying a small such modification we obtain a field satisfying the 
necessary conditions on the boundary of~$E$.
The fractional index of $\t$ at a godron is then computed using the above definition (Fig.\,\ref{index-at-boundary}). 
\smallskip

\noindent
\textbf{Fact  2c}. The local computations at the godrons on $\D E$ (Proposition\,\ref{local-index-p} below) show that  
\textit{the index at a positive godron equals $-1/3$ and at a negative godron equals $1/3$}\,: 
\begin{equation}\label{indices-godrons}
\ind_{g^+}\t=-\frac13,\qquad\ind_{g^-}\t=\frac13\,.
\end{equation}

Theorem\,\ref{Euler-extended} together with equalities \eqref{index-elliptic} and \eqref{indices-godrons} imply the relation
\[\# e(E)\,-\,\# g(\D E)\,=\,3\chi(E)\,. \eqno{\square}\]

\section{Local indices at hyperbonodes, ellipnodes and godrons}\label{Section-Local-Indices}
{\footnotesize
  We shall compute the local index of the multivalued line field $\t$, defined by the fundamental cubic form
  $W=4HC-QdH$, at generic hyperbonodes, ellipnodes and godrons. To perform these computations (by hand),
  we only need to find the relevant terms of $C$, $Q$, $H$ and $dH$.} 

\paragraph{6.1 Local Index at a Hyperbonode.} Write the surface in Monge form $z=f(x,y)$.

\begin{proposition}\label{local-index-h}
The local index of a hyperbonode $h$, taking the asymptotic lines as
coordinate axes, equals
\[\ind_h(\t)=1\cdot\sign\left(4f_{11}^2f_{40}f_{04}-(2f_{11}f_{31}-3f_{21}^2)(2f_{11}f_{13}-3f_{12}^2)\right)\,.\]  
\end{proposition}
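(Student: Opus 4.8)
The plan is to realise $\t$ near $h$ as the field of real zero lines of the cubic form $W=4HC-Q\,dH$, to expand $W$ to first order in the base point, and then to read off the winding number of its unique real zero line. First I would fix the normalisation: placing $h$ at the origin in Monge form $z=f(x,y)$ and taking the two asymptotic lines as the coordinate axes forces $f_{20}=f_{02}=0$, so that $Q=f_{11}xy$ and $H(0,0)=-f_{11}^2<0$ (a hyperbolic point). The hyperbonode condition $C=QL$ means $C$ is divisible by $xy$, hence $f_{30}=f_{03}=0$; the relevant low-order jet is thus governed by $f_{11}$, the two surviving cubic coefficients $f_{21},f_{12}$, and the quartic coefficients $f_{40},f_{31},f_{22},f_{13},f_{04}$. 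Since at $h$ the cubic $C$ lies in $U_Q^+$, its projection $\what W$ onto $U_Q^-$ vanishes, so $W=0$ at the origin: the field is singular there, and its index is determined by the part $W_1$ of the coefficients of $W$ that is linear in $(x,y)$.

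The core computation is to expand $W_{(x,y)}(\xi,\eta)=4H(x,y)\,C_{(x,y)}(\xi,\eta)-Q_{(x,y)}(\xi,\eta)\,dH_{(x,y)}(\xi,\eta)$ keeping terms linear in $(x,y)$, where $Q_{(x,y)}$ is the second fundamental form, $C_{(x,y)}$ the osculating cubic, and $dH_{(x,y)}$ the differential of the Hessian function, all expressed in the fixed tangent coordinates $(\xi,\eta)$. This needs the linear parts of $Q_{(x,y)}$ and $C_{(x,y)}$ and the first two jets of $H$. I expect the decisive and most error-prone step to be the verification that the coefficients of $\xi^2\eta$ and of $\xi\eta^2$ in $W_1$ cancel identically, so that to first order $W$ stays in $U_{Q_0}^-=\langle\xi^3,\eta^3\rangle$ and
\[
W_1=A(x,y)\,\xi^3+D(x,y)\,\eta^3,
\]
with (as I expect to find, after collecting the contributions of $4H_0C_1$, $4H_1C_0$, $-Q_0(dH)_1$ and $-Q_1(dH)_0$)
\[
\begin{aligned}
A&=-\tfrac{f_{11}}{3}\bigl[\,2f_{11}f_{40}\,x+(2f_{11}f_{31}-3f_{21}^2)\,y\,\bigr],\\
D&=-\tfrac{f_{11}}{3}\bigl[\,(2f_{11}f_{13}-3f_{12}^2)\,x+2f_{11}f_{04}\,y\,\bigr].
\end{aligned}
\]
This cancellation is exactly what forces a single real zero line in the hyperbolic case, in accordance with item 4 of Theorem~\ref{thfund3}.

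Finally I would compute the index as the winding of this single real line along a small circle $(x,y)=r(\cos t,\sin t)$. Writing $s=\eta/\xi$, the zero line has slope $s$ with $s^3=-A/D$; since cubing is a degree-one self-map of $\RP^1$, the winding of the line equals the winding of the $\RP^1$-valued function $t\mapsto -A/D$, a ratio of two sinusoids. Counting its signed passages through a regular value shows this winding equals $2\,\sign(\alpha_1\delta_2-\alpha_2\delta_1)$, where $A=\alpha_1x+\alpha_2y$ and $D=\delta_1x+\delta_2y$; as a line field the index is half of that, so
\[
\begin{aligned}
\ind_h(\t)&=\sign(\alpha_1\delta_2-\alpha_2\delta_1)\\
&=\sign\bigl(4f_{11}^2f_{40}f_{04}-(2f_{11}f_{31}-3f_{21}^2)(2f_{11}f_{13}-3f_{12}^2)\bigr),
\end{aligned}
\]
the positive factor $f_{11}^2/9$ dropping out of the sign. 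The two genuine obstacles are therefore the first-order bookkeeping (above all the vanishing of the mixed coefficients) and pinning down the orientation conventions in the passage from winding to index so that the leading factor is exactly $+1$; I would calibrate the latter on the model $4$-jet $z=xy+\tfrac{1}{4!}(x^4+y^4)$, where the formula reduces to $\sign(IJ-ab)$ in agreement with the prenormal form of \S2.6.
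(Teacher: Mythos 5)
Your proposal is correct and follows essentially the same route as the paper's proof: Monge form with the asymptotic lines as axes, first-order expansion of $W=4HC-Q\,dH$ at the hyperbonode, cancellation of the mixed $dx^2dy$ and $dxdy^2$ terms, and the index read off as the sign of the determinant of the resulting linear map $(x,y)\mapsto(A,D)$ — and your predicted coefficients $A$, $D$ agree exactly with formula \eqref{FCF-hyperbonode} in the paper. Your only divergence is cosmetic: you spell out the winding-number bookkeeping (the cube-root homeomorphism of $\RP^1$ and the factor of two between $\RP^1$-degree and line-field index) that the paper's proof asserts more briefly.
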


\begin{corollary*}
For the above normal forms of Landis-Platonova and of Ovsienko-Tabachnikov we get the respective 
expressions of the index
\[\ind_h(\t)=\pm 1-ab \  \qquad \text{and} \ \qquad \ind_h(\t)=IJ\mp 1\,.\]
\end{corollary*}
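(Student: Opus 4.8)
The plan is to prove the corollary by direct substitution of the two normal forms into the index formula of Proposition~\ref{local-index-h}. The key preliminary observation is that both normal forms have quadratic part $Q=xy$, so the zero set of $Q$---the pair of asymptotic lines---is precisely the pair of coordinate axes. Hence the hypothesis of Proposition~\ref{local-index-h} (asymptotic lines taken as coordinate axes) is automatically satisfied, and from $Q=xy$ I read off $f_{11}=1$. Equally important, neither normal form contains any degree-$3$ monomial (every higher term listed, $ax^3y$, $bxy^3$, $x^4$, and so on, is of degree $4$). Therefore $f_{21}=f_{12}=0$, which collapses the second product in the formula to $(2f_{11}f_{31}-3f_{21}^2)(2f_{11}f_{13}-3f_{12}^2)=4f_{31}f_{13}$.

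First I would treat the Landis--Platonova form $z=xy+\frac{1}{3!}(ax^3y+bxy^3)+\frac{1}{4!}(x^4\pm y^4)$. Extracting the fourth-order coefficients via $f_{ij}=i!\,j!\cdot[\text{coefficient of }x^iy^j]$ gives $f_{31}=a$, $f_{13}=b$, $f_{40}=1$ and $f_{04}=\pm1$. Substituting into the argument of the sign yields
\[
4f_{11}^2f_{40}f_{04}-4f_{31}f_{13}=4(\pm1)-4ab=4(\pm1-ab).
\]
Since the overall factor $4$ is positive it does not affect the sign, so $\ind_h(\t)=\sign(\pm1-ab)$, which is the claimed expression $\pm1-ab$.

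Next I would repeat the computation for the Ovsienko--Tabachnikov form $z=xy+\frac{1}{3!}(x^3y\pm xy^3)+\frac{1}{4!}(Ix^4+Jy^4)$. Here the same extraction gives $f_{31}=1$, $f_{13}=\pm1$, $f_{40}=I$ and $f_{04}=J$, so the argument of the sign becomes
\[
4f_{11}^2f_{40}f_{04}-4f_{31}f_{13}=4IJ-4(\pm1)=4(IJ\mp1),
\]
whence $\ind_h(\t)=\sign(IJ\mp1)$, matching the stated expression $IJ\mp1$; note that here the sign in the cubic cross-term sits inside the subtracted product, so the $\pm$ of the normal form becomes the $\mp$ of the index, exactly as indicated, whereas in the Landis--Platonova case the sign sits in the leading term and is preserved.

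There is no genuine conceptual obstacle here: the entire content is a substitution. The only point demanding care is the bookkeeping of the normalisation factors $1/3!$ and $1/4!$ when passing from Taylor coefficients to the derivatives $f_{ij}$, together with tracking how the two independent sign choices in each normal form propagate through the subtraction. I would also remark in passing that the genericity conditions $ab\neq\pm1$ (respectively $IJ\neq\pm1$) are precisely what make the argument of the sign nonzero, so that $\ind_h(\t)$ is well defined, consistent with these normal forms describing nondegenerate hyperbonodes.
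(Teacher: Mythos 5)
Your proposal is correct and matches the paper's (implicit) derivation: the corollary is stated without separate proof precisely because it follows by the substitution you perform, namely $f_{11}=1$, $f_{21}=f_{12}=0$ (no cubic terms since $Q=xy$ makes the axes asymptotic), and the fourth-order coefficients $f_{31},f_{13},f_{40},f_{04}$ read off from each normal form, giving $\sign\bigl(4(\pm1-ab)\bigr)$ and $\sign\bigl(4(IJ\mp1)\bigr)$ respectively. Your bookkeeping of the $3!$, $4!$ factors, the $\pm\mapsto\mp$ sign propagation in the Ovsienko--Tabachnikov case, and the remark that the genericity conditions $ab\neq\pm1$, $IJ\neq\pm1$ guarantee a nonzero argument are all accurate.
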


\begin{remark}\label{remark-second-expression}
  If at $h$ we take the diagonals $\,y=\pm x\,$ as asymptotic lines and we assume the cubic 
  terms of $f$ are missing for the chosen affine coordinate system, then
\begin{equation}\label{second-index-expression}
\ind_h(\t)=1\cdot\sign\left((f_{40}+3f_{22})(f_{04}+3f_{22})-(f_{31}+3f_{13})(f_{13}+3f_{31})\right)\,.
\end{equation}   
\end{remark}

{\footnotesize
\noindent
\textbf{Notation}. In order to get not so long lines along the proofs, we shall 
replace the partial derivatives $f_{ij}$ with constants $\a, u, v, a, b, I, J$, etc, just for the calculations. }
\medskip

\noindent
\textit{Proof of Proposition}\,\ref{local-index-h}. 
  Let us locally express the surface in Monge form
\[f(x,y)=\a xy+\frac{1}{2}ux^2y+\frac{1}{2}vxy^2+\frac{1}{3!}(ax^3y+bxy^3)+\frac{1}{4!}(Ix^4+Jy^4).\]
  The explicit expressions of the relevant terms for $H$, $dH$, $Q$ and $C$ are {\small
\begin{enumerate}[label=\empty, parsep=0.124cm, itemsep=0.0cm,topsep=0.2cm]\itemsep=0.0cm  
\item $H\approx-\a^2-2\a(ux+vy)+\ldots$\,;
\item $dH\approx-(2\a u+2(u^2+\a a)x+uvy+\ldots)dx-(2\a v+2(v^2+\a b)y+uvx+\ldots)dy $\,;
\item $Q \approx\dfrac{1}{2}(uy+axy+\ldots)dx^2+(\a+ux+vy+\ldots)dxdy
  +\dfrac{1}{2}(vx+bxy+\ldots)dy^2$\,;
\item $C=\dfrac{(ay+Ix)dx^3}{3!}+\dfrac{(u+ax)dx^2dy}{2}+\dfrac{(v+by)dxdy^2}{2}+\dfrac{(bx+Jy)dy^3}{3!}$\,.
\end{enumerate}}
Then the expression for $4HC$ up to terms of order $1$ in $x$, $y$ is 
\[4HC\approx-\dfrac{2}{3}\a^2(ay+Ix)dx^3+\f(x,y)dx^2dy+\psi(x,y)dxdy^2-\dfrac{2}{3}\a^2(bx+Jy)dy^3,\] 
where $\f(x,y)=-2\a^2(u+ax)+4\a u(ux+vy)$ and $\psi(x,y)=-2\a^2(v+by)+4\a v(ux+vy)$.

The corresponding expression for $QdH$, up to its first order terms, is 
\[QdH\approx -\a u^2y\,dx^3+\f(x,y)dx^2dy+\psi(x,y)dxdy^2 -\a v^2x\,dy^3\,.\]
Therefore, the fundamental cubic form $W=4HC-QdH$ is given by 
\begin{equation}\label{FCF-hyperbonode}
W\approx-\frac{\a}{3}\left(2\a Ix+(2\a a-3u^2)y\right)dx^3-\frac{\a}{3}\left((2\a b-3v^2)x+2\a Jy\right)dy^3.
\end{equation}

Given a point that makes a positive turn on a very small circle around the origin,  
the line of zeroes of \eqref{FCF-hyperbonode} makes a positive turn if and only if
the image of our small circle by the map
$(x,y)\mapsto \left(2\a Ix+(2\a a-3u^2)y, (2\a b-3v^2)x+2\a Jy\right)$
makes a positive turn around $(0,0)$; that is, if and only if this map
preserves the orientation. Since its determinant is equal to 
$4\a^2 IJ-(2\a a-3u^2)(2\a b-3v^2)$, we get that  
\[\ind_h(\t)=1\cdot\sign\left(4f_{11}^2f_{40}f_{04}-(2f_{11}f_{31}-3f_{21}^2)(2f_{11}f_{13}-3f_{12}^2)\right)\,. \eqno{\square}\]

\paragraph{6.2 Local Index at an Ellipnode.} Writing the surface in Monge form $z=f(x,y)$ with
$Q=\frac{\a}{2}(x^2+y^2)$ and assuming the cubic terms of $f$ are missing (for the chosen affine 
coordinate system) we get the  

\begin{proposition}\label{local-index-e}
The local index of $\t$ at an ellipnode $e$ for which $Q=\frac{\a}{2}(x^2+y^2)$, equals
\[\ind_e(\t)=\frac{1}{3}\cdot\sign\left((f_{31}-3f_{13})(f_{13}-3f_{31})-(f_{40}-3f_{22})(f_{04}-3f_{22})\right)\,.\] 
{\rm(Compare this formula with expression \eqref{second-index-expression} of Remark\,\ref{remark-second-expression}.)}
\end{proposition}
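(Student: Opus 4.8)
\emph{Proof plan.} The plan is to mirror the hyperbonode computation of Proposition~\ref{local-index-h}, replacing the hyperbolic normal form of $Q$ by the elliptic one. I would write the surface in the assumed Monge form
\[f(x,y)=\frac{\a}{2}(x^2+y^2)+\frac{1}{4!}\big(f_{40}x^4+4f_{31}x^3y+6f_{22}x^2y^2+4f_{13}xy^3+f_{04}y^4\big)+\dots,\]
the cubic part being absent. This is a legitimate affine normalisation at an ellipnode: there $C=QL$ lies in $U_Q^+$, so a suitable affine change of the ambient coordinates removes it, and since the field $\t$ and its index are projective invariants defined up to a factor, computing $W=4HC-QdH$ in these coordinates yields the intrinsic index. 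As in the hyperbolic case, at each base point $(x,y)$ near the origin I would evaluate $Q$ and $C$ as the quadratic and cubic forms built from the second and third derivatives of $f$ at $(x,y)$, and $H=f_{20}f_{02}-f_{11}^2$ together with its honest differential $dH=H_x\,dx+H_y\,dy$.

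\emph{Key simplification.} Because the cubic part is absent, the functions $f_{20},f_{11},f_{02}$ carry no linear term in $(x,y)$; hence $H=\a^2+O(|x,y|^2)$ and $dH$ \emph{vanishes} at the origin, its leading (linear) part being
\[dH\approx\a\big((f_{40}+f_{22})x+(f_{31}+f_{13})y\big)dx+\a\big((f_{31}+f_{13})x+(f_{22}+f_{04})y\big)dy.\]
On the other hand the coefficients of $C$ are themselves linear in $(x,y)$. Thus both $4HC\approx4\a^2C$ and $Q\,dH$ are of first order in $(x,y)$, and retaining only these linear terms I would compute $W=4HC-Q\,dH$. The difference must land in $U_Q^-=\langle\,dx^3-3\,dx\,dy^2,\ 3\,dx^2dy-dy^3\,\rangle$ (Theorem~\ref{thfund3}), and a direct calculation gives
\[W\approx A\,(dx^3-3\,dx\,dy^2)+B\,(3\,dx^2dy-dy^3),\]
\[A=\tfrac{\a^2}{6}\big((f_{40}-3f_{22})x+(f_{31}-3f_{13})y\big),\qquad B=\tfrac{\a^2}{6}\big((3f_{31}-f_{13})x+(3f_{22}-f_{04})y\big).\]

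\emph{Index extraction.} Since $dx^3-3\,dx\,dy^2$ and $3\,dx^2dy-dy^3$ are the real and imaginary parts of $(dx+i\,dy)^3$, the form is $W=\mathrm{Re}\big[(A-iB)(dx+i\,dy)^3\big]$, so its three zero lines are rigidly spaced at angles $\pi/3$ and rotate together as $\arg(A-iB)$ varies. Exactly as in Proposition~\ref{local-index-h} (where the single line rotated with the coefficient map), the rotation of this rigid triple along a small positive loop is governed by the linear map $(x,y)\mapsto(A,B)$, and the index equals $\tfrac13$ (the three lines) times the sign of its Jacobian, which is
\[\Big(\tfrac{\a^2}{6}\Big)^{2}\big[(f_{40}-3f_{22})(3f_{22}-f_{04})-(f_{31}-3f_{13})(3f_{31}-f_{13})\big]=\Big(\tfrac{\a^2}{6}\Big)^{2}\big[(f_{31}-3f_{13})(f_{13}-3f_{31})-(f_{40}-3f_{22})(f_{04}-3f_{22})\big].\]
Its sign is the asserted one, giving $\ind_e(\t)=\tfrac13\,\sign(\dots)$.

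\emph{Main obstacle.} The delicate step is the index bookkeeping — pinning down both the factor $\tfrac13$ and the \emph{sign}. Unlike the hyperbonode case (a single line, index $\pm1$), here the oriented zero directions are cyclically permuted: a positive base loop advances $\arg(A-iB)$ by $\pm2\pi$ and rotates each oriented line by $\mp2\pi/3$, so a line returns to itself only after three base loops, during which it makes exactly $\mp1$ full turn; this yields the fractional index $\mp\tfrac13$. I would carry out this winding count carefully, fixing the orientation of the surface, and check that the resulting sign is $+\tfrac13\,\sign(\text{Jacobian})$ rather than its opposite — the choice between $A-iB$, $A+iB$ and the matrix $(x,y)\mapsto(A,B)$ is precisely what controls this sign. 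The remaining work — the leading terms of $H$, $dH$, $C$ and the subtraction producing $A$ and $B$ — is routine, and the ellipnode genericity condition is exactly the non-vanishing of the displayed expression.
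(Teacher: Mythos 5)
Your proposal is correct and is essentially the paper's own proof: the same prenormal form with the cubic part absent, the same leading-order expressions for $H$, $dH$, $C$, the same subtraction $W=4HC-Q\,dH$ landing in $U_Q^-$ with exactly the coefficients $A,B$ the paper obtains, and the same conclusion that $\ind_e(\t)$ is $\tfrac13$ times the sign of the determinant of the linear map $(x,y)\mapsto(A,B)$. Your additional touches --- justifying the removal of the cubic terms by an affine shear (legitimate, since at an ellipnode $C=QL\in U_Q^+$), and the explicit winding count via $W=\mathrm{Re}\bigl[(A-iB)(dx+i\,dy)^3\bigr]$ --- merely spell out steps the paper asserts without detail.
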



\noindent
\textit{Proof of Proposition}\,\ref{local-index-e}. 
  Let us locally express the surface in Monge form
\[f(x,y)=\frac{\a}{2}(x^2+y^2)+\frac{1}{3!}(ax^3y+bxy^3)+\frac{c}{4}x^2y^2+\frac{1}{4!}(Ix^4+Jy^4).\]
  The explicit expressions of the relevant terms for $H$, $dH$, $Q$ and $C$ are {\small
\begin{enumerate}[label=\empty, parsep=0.124cm, itemsep=0.0cm,topsep=0.2cm]\itemsep=0.0cm  
\item $H\approx \a^2+\ldots$\,;
\item $dH\approx\a\left((a+b)y+(c+I)x+\ldots\right)dx+\a\left((a+b)x+(c+J)y+\ldots\right)dy $\,;
\item $Q \approx\dfrac{1}{2}(\a+\ldots)dx^2+(\ldots)dxdy+\dfrac{1}{2}(\a+\ldots)dy^2$\,;
\item $C=\dfrac{(ay+Ix)dx^3}{3!}+\dfrac{(ax+cy)dx^2dy}{2}+\dfrac{(by+cx)dxdy^2}{2}+\dfrac{(bx+Jy)dy^3}{3!}$\,.
\end{enumerate}}
Then the expression for $4HC$ up to terms of order $1$ in $x$, $y$\, is 
\[4HC\approx 2\a^2\left(\dfrac{1}{3}(ay+Ix)dx^3+(ax+cy)dx^2dy+(by+cx)dxdy^2+\dfrac{1}{3}(bx+Jy)dy^3\right),\] 
and the corresponding expression for $QdH$, up to its first order terms, is 
\[QdH\approx \frac{\a^2}{2}\left(\left((a+b)y+(c+I)x\right)(dx^3+dxdy^2)+\left((a+b)x+(c+J)y\right)(dx^2dy+dy^3)\right)\,.\]
Therefore, the fundamental cubic form $W=4HC-QdH$ is given by {\small 
\[W\approx\frac{\a^2}{6}\left(\left((I-3c)x+(a-3b)y\right)(dx^3-3dxdy^2)+
\left((3a-b)x+(3c-J)y\right)(3dx^2dy-dy^3)\right).\]}

Hence, the local index of the ellipnode $e$ is $1/3$ multiplied by the sign of the determinant of the linear map
$(x,y)\mapsto \left((I-3c)x+(a-3b)y,(3a-b)x+(3c-J)y \right)$, which is equal to
$(a-3b)(b-3a)-(I-3c)(J-3c)$. Therefore  
\[\ind(e)=\frac{1}{3}\cdot\sign\left((f_{31}-3f_{13})(f_{13}-3f_{31})-(f_{40}-3f_{22})(f_{04}-3f_{22})\right)\,. \eqno{\square}\]

\paragraph{6.3 Local Index at a Godron.} In this case, we base our computations on basic
geometric properties of the three-valued line field $\t$ near the considered godron. 
\begin{proposition}\label{local-index-p}
  At a positive godron the local index of the field $\tau$ equals $-1/3$ and at a negative godron it
  is equal to $1/3$\,$:$
\[\ind_{g^+}(\tau)=-\frac{1}{3} \quad \text{and} \quad \ind_{g^-}(\tau)=\frac{1}{3}\,.\] 
\end{proposition}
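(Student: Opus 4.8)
The plan is to reduce to the Landis–Platonova normal form $(\mathcal P)$, $f=\tfrac{y^2}{2}-x^2y+\tfrac{\rho}{2}x^4$ (with $\rho>1$ for $g^+$ and $\rho<1$ for $g^-$), and to compute the fundamental cubic form $W=4HC-Q\,dH$ explicitly there. Using $f_{xx}=-2y+6\rho x^2$, $f_{xy}=-2x$, $f_{yy}=1$, $f_{xxx}=12\rho x$, $f_{xxy}=-2$ (with $f_{xyy}=f_{yyy}=0$), the computation gives
\begin{equation*}
W=\bigl(2\rho(6\rho-4)x^3-4(\rho+2)xy\bigr)\,dx^3+6(y+\rho x^2)\,dx^2dy-6\rho x\,dx\,dy^2+dy^3,
\end{equation*}
so the slopes $t=dy/dx$ of the zero lines of $\tau$ satisfy
\begin{equation*}
t^3-6\rho x\,t^2+6(y+\rho x^2)\,t+2\rho(6\rho-4)x^3-4(\rho+2)xy=0 .
\end{equation*}
This cubic is quasi-homogeneous of weight $3$ for the weights $(x,y,t)\mapsto(1,2,1)$; this is what makes the index well defined and lets me pass to a fixed weighted model (on a round circle the slopes tend to $0$ and the lines seem not to turn, whereas on a weighted circle the three directions genuinely rotate).

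First I would read the local geometry off this cubic. Here $H=-2y+(6\rho-4)x^2$, so $\partial E$ is the parabola $y=(3\rho-2)x^2$, $E=\{y<(3\rho-2)x^2\}$, and the triple line at the godron is $dy=0$, tangent to $\partial E$ (item~6 of Theorem~\ref{thfund3}). Restricting the cubic to $\partial E$ it factors as $(t-2x)^2\bigl(t-(6\rho-4)x\bigr)$, exhibiting the \emph{double} line (slope $2x$, the asymptotic direction, i.e.\ the double zero of $Q$) and the \emph{simple} line (slope $(6\rho-4)x$, tangent to $\partial E$), exactly as in item~5 of Theorem~\ref{thfund3}. I would also note that the triple-root locus is $y=\rho(2\rho-1)x^2$, which lies in $\bar H$ (it is strictly above $\partial E$ for $\rho\neq1$), so $\gamma$ encloses the godron and no ellipnode. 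The decisive point is the relative position of the two boundary slopes: $6\rho-4\gtrless 2$ exactly when $\rho\gtrless 1$, i.e.\ according to whether the godron is positive or negative. This single inequality is what fixes the sign.

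Next I would apply the border–index construction (Fig.~\ref{index-at-boundary}): take $\gamma=\partial D\cap\bar E$ with endpoints $A$ ($x>0$) and $B$ ($x<0$), follow the three lines continuously along $\gamma$, and close up by the identification $A\sim B$ matching the boundary–tangent (simple) line and the merging (double) pair. Tracking the three branches from $A$ to $B$—two splitting off the asymptotic direction and one issuing from the tangent direction—and using the reflection symmetry $(x,t)\mapsto(-x,-t)$ of the cubic (which sends the configuration at $A$ to that at $B$), I would show that the monodromy of $\tau$ around the godron is a $3$-cycle, forcing denominator $3$, and that the net turning of the lines amounts to a single line–turn, with orientation reversed between the two orderings $6\rho-4\gtrless 2$. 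This gives $\ind_{g^\pm}(\tau)=\mp\tfrac13$, to be combined with \eqref{index-elliptic} in the proof of Theorem~\ref{main-theorem}c.

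The hard part is precisely this last step: correctly accounting for the rotation contributed by the gluing seam. Because the quasi-homogeneous scaling is not conformal, the angular rotation of the lines is not scale invariant, so one must verify loop–independence and, above all, pin down how the two nearly–merged double lines at $A$ are matched to those at $B$; this is what separates a $3$-cycle (giving $\pm\tfrac13$) from a transposition (which would give a quarter–integer). The cleanest way to settle it is the collar extension of Theorem~\ref{Euler-extended}, which turns the godron into an honest interior singularity whose index is a genuine rotation number; equivalently, and in the spirit of §6.3, one reads the rotation directly from items~5--6 of Theorem~\ref{thfund3}, using that the simple branch stays tangent to $\partial E$ while the double branch follows the asymptotic field, whose half-lines point toward $g^+$ and away from $g^-$ (\S2.3). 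Either route yields the stated values $\ind_{g^+}(\tau)=-\tfrac13$ and $\ind_{g^-}(\tau)=+\tfrac13$.
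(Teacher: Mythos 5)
Your proposal is correct and is essentially the paper's own argument: you use the same Landis--Platonova normal form and obtain the same expression for $W$ (your formula agrees with \eqref{fcf-godron}), the same reflection symmetry with respect to the $y$-axis (your oddness of the slope cubic under $(x,t)\mapsto(-x,-t)$ is the paper's $W_{(x,y)}(v_x,v_y)=W_{(-x,y)}(-v_x,v_y)$), and the same border-index bookkeeping, with your inequality $6\rho-4\gtrless 2$ between the simple (boundary-tangent) slope and the double (asymptotic) slope being exactly the algebraic form of the \S\,2.3 characterisation of positive/negative godrons that the paper invokes via Fig.\,\ref{index-godron}. The only difference is presentational: where you track the monodromy of the three branches (verifying it is a $3$-cycle) and appeal to the collar construction of Theorem\,\ref{Euler-extended} to pin down the seam, the paper tracks the thin sector bounded by the two merging lines through three loops (Figs.\,\ref{index-g+=-1/3} and \ref{index-g-=1/3}), which is the same computation.
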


\begin{proof}
  Near a godron given in Landis-Platonova normal form 
  \[f(x,y)=\frac{1}{2}y^2-x^2y+\frac{1}{2}\rho x^4\,,\]
  we get the following expression for the fundamental cubic form 
\begin{equation}\label{fcf-godron}
W=\left(-(4\rho+8)y+(12\rho^2-8\rho)x^2\right)xdx^3+6\left(y+\rho x^2\right)dx^2dy-(6\rho x)dxdy^2+dy^3.
\end{equation}

\noindent
\textbf{Basic observations}. 
By formula \eqref{fcf-godron} the lines of zeroes of $W$ are never vertical (never parallel
to the $y$-axis) because the coefficient of $dy^3$ is the constant $1$ (near a godron the three
lines are close to the horizontal). Moreover, these lines of zeroes and the sectors where $W$ takes 
positive (or negative) values are symmetric with respect to the $y$-axis:
  $W_{(x,y)}(v_x,v_y)=W_{(-x,y)}(-v_x,v_y)$. We only need these basic observations from $W$. 

At the parabolic points near a positive godron $g^+$, the half-asymptotic lines directed to the hyperbolic domain
point towards $g^+$ (see Fig.\,\ref{index-godron}). Then
at the elliptic points with $x<0$, near $g^+$, we have two close half-lines of zeroes that determine a thin sector
pointing to $H$ and to $g^+$ (sector $1$ in Fig.\ref{index-g+=-1/3}).
Our basic observations imply that after moving to the elliptic points with $x>0$,
near $g^+$, the considered sector points away from $g^+$ and is adjacent to the parabolic curve from
the elliptic side.
Then we get the index $-1/3$ because the sector makes a complete negative
turn after three such loops around $g^+$. 

\begin{figure}[h] 
\centering
\includegraphics[scale=0.37]{}
\caption{\small A positive godron contributes $-1/3$ to the Euler characteristic of $E$.}
\label{index-g+=-1/3}
\end{figure}

At the parabolic points near a negative godron $g^-$, the half-asymptotic lines directed to the hyperbolic domain
point away from  $g^-$ (Fig.\,\ref{index-godron}). Thus 
at the elliptic points with $x<0$, near $g^-$, we have two close half-lines of zeroes that determine a thin sector
pointing to $H$ and away from $g^-$ (sector $1$ in Fig.\ref{index-g-=1/3}).
Our basic observations imply that after moving to the elliptic
points with $x>0$, near $g^-$, the considered sector points to $g^-$ and is adjacent to the parabolic
curve from the elliptic side. Then we get the index $1/3$ because the sector makes a complete positive
turn after three such loops around $g^-$. 
\end{proof}

\begin{figure}[h] 
\centering
\includegraphics[scale=0.37]{}
\caption{\small A negative godron contributes $+1/3$ to the Euler characteristic of $E$.}
\label{index-g-=1/3}
\end{figure}


{\footnotesize 
  
}

\end{document}